\DeclareFontFamily{OML}{script}{}
\DeclareFontShape{OML}{script}{m}{it}
{ <5-20> rsfs10 }{}
\DeclareMathAlphabet{\mathscript}{OML}{script}{m}{it}
\newcommand{\red}{\color{red}}
\newcommand{\Rmnum}[1]{\uppercase\expandafter{\romannumeral #1}}
\newcommand{\text}{\mbox}
\newcommand{\operatorname}{\mathop}
\newtheorem{theorem}{Theorem}[section]
\newtheorem{lemma}[theorem]{Lemma}
\newtheorem{proposition}[theorem]{Proposition}
\newtheorem{remark}[theorem]{Remark}
\theoremstyle{remark}
\newcommand{\void}[1]{}
\numberwithin{equation}{section}
\renewcommand\d{\mathrm{d}}
\renewcommand{\leq}{\leqslant}
\renewcommand{\geq}{\geqslant}
\DeclareMathOperator{\divergence}{div}
\DeclareMathOperator{\curl}{curl}
\begin{document}
\begin{CJK}{UTF8}{gkai}

\title[1]{On Type I blowup and $\varepsilon$-regularity criteria for suitable weak solutions to the 3D incompressible MHD equations}
\author{Wentao Hu and Zhengce Zhang}
\date{}
\address[Wentao Hu]{School of Mathematics and Statistics, Xi'an Jiaotong University,
Xi'an, 710049, P. R. China}
\email{huwentao@stu.xjtu.edu.cn}
\address[Zhengce Zhang]{School of Mathematics and Statistics, Xi'an Jiaotong University,
Xi'an, 710049, P. R. China}
\email{zhangzc@mail.xjtu.edu.cn}
\thanks{Corresponding author: Zhengce Zhang}
\thanks{Keywords: MHD equations; Suitable weak solutions; $\varepsilon$-regularity; Type I blowup}
\thanks{2020 Mathematics Subject Classification: 35Q30; 35Q35; 35B65; 35B44; 76W05}

\begin{abstract}
We study interior $\varepsilon$-regularity and Type I blowup criteria for suitable weak solutions to the three-dimensional incompressible MHD equations. Our starting point is a direct iteration scheme for the classical Caffarelli--Kohn--Nirenberg scaled energy quantities $A,E,C$ and $D$, which yields $\varepsilon$-regularity criteria under smallness assumptions on the velocity field $u$ and boundedness assumptions on the magnetic field $b$, with the underlying scaling-invariant quantities chosen independently. As an intermediate step, we prove that finiteness of one such scaling-invariant quantity for each of $u$ and $b$ allows only Type I blowup, in the sense that $A(u,b;r)+E(u,b;r)+C(u,b;r)+D(p;r)<\infty$ for small $r$. This extends Seregin's Type I criteria for the Navier--Stokes equations to the MHD setting and provides a natural point of departure for the analysis of Type II blowup. By interpolation and embedding, we further obtain $\varepsilon$-regularity criteria and Type I characterisations in terms of general scaled mixed Lebesgue norms for $u$ and $b$, with independent exponent choices. While we do not aim to sharpen existing mixed-norm $\varepsilon$-regularity criteria, the present formulation offers a unified and comparatively direct route that is naturally compatible with the Type I framework; in particular, the mixed-norm Type I description does not follow from earlier mixed-norm $\varepsilon$-regularity proofs by a formal replacement of the smallness parameter.
\end{abstract}

\maketitle

\section{Introduction}
Consider the three-dimensional incompressible magnetohydrodynamic (MHD) equations:
\begin{equation}
    \label{eq: MHD}
    \begin{dcases}
    \partial_t u+u \cdot \nabla u-\Delta u+\nabla p=b \cdot \nabla b, \\
    \partial_t b+u \cdot \nabla b-\Delta b=b \cdot \nabla u, \\
    \divergence u=\divergence b=0,
    \end{dcases}
\end{equation}
where the unknown vector fields $u,b$ and scalar field $p$ represent the velocity field, the magnetic field, and the pressure, respectively. The system \eqref{eq: MHD} depicts the motion of viscous incompressible electrically conducting fluids in the absence of external forces. When $b \equiv 0$, the system \eqref{eq: MHD} reduces to the three-dimensional incompressible Navier--Stokes equations
\begin{equation}
    \label{eq: NS}
    \begin{dcases}
    \partial_t u+u \cdot \nabla u-\Delta u+\nabla p=0, \\
    \divergence u=0,
    \end{dcases}
\end{equation}
which has been studied intensively during the past decades, see \cite{leray1934Mouvement,hopf1950Uber,prodi1959Teorema,serrin1962Interior,serrin1963Initial,ladyzhenskaya1967Uniqueness,caffarelli1982Partial,lin1998New,escauriaza2003Solutions,CSYT2008,CSYT2009,koch2009liouville} and references therein. To be specific, Leray \cite{leray1934Mouvement} and Hopf \cite{hopf1950Uber} proved the existence of weak solutions to \eqref{eq: NS} in $\mathbb{R}^3$ and bounded domains in $\mathbb{R}^3$, respectively. Ladyzhenskaya, Prodi, and Serrin \cite{prodi1959Teorema,serrin1962Interior,serrin1963Initial,ladyzhenskaya1967Uniqueness} studied the regularity of solutions in the class $L_t^s L_x^q$ with $2/s+3/q \leq 1$, $s \geq 2$, $q>3$ independently. This type of conditions (LPS conditions for short), roughly speaking, enables us to estimate the nonlinear term $u \cdot \nabla u$ like a linear term, which leads to regularity. However, the borderline case $s=\infty$, $q=3$ is quite different and much more difficult. It was not until 2003 that Escauriaza, Seregin and {\v S}ver\'ak \cite{escauriaza2003Solutions} proved the regularity in this case by developing a new method based on the unique continuation theory. See also \cite{seregin2012Certain} for further improvements. Similar results were also obtained for MHD equations. For instance, Duvaut and Lions \cite{duvaut1972Inequations} proved the global existence of weak solutions to \eqref{eq: MHD} in simply connected bounded domains, whereas Sermange and Temam \cite{sermange1983Mathematical} studied the case where $u,b$ are periodic in space variables. Regarding the regularity under the LPS conditions, Wu \cite{wu2004Regularity} proved analogous results to Navier--Stokes equations, with both $u,b$ being assumed to belong to $L_t^s L_x^q(\mathbb{R}^3)$, where $2/s+3/q=1$, $s \geq 2$, $q>3$, while He and Xin \cite{he2005Regularity} showed that the assumption on $b$ can be dropped. See also \cite{zhou2005Remarks} for the case where $u \in L_t^s L_x^q$ with $2/s+3/q \leq 1$, $s \geq 2$, $q>3$. Compared with \cite{wu2004Regularity}, these results only made assumptions on $u$, which suggests that the velocity field plays a more dominant role than the magnetic field in the regularity theory, just as the numerical results in \cite{politano1995Current} implied. The borderline case $s=\infty$, $q=3$ is quite different, and by applying similar technique to \cite{escauriaza2003Solutions}, Mahalov, Nicolaenko and Shilkin \cite{mahalov2006Solutions} proved the regularity of solutions in $L_t^\infty L_x^3$. There are also many papers focusing on imposing mixed type of LPS conditions on components of $(u,b)$. For example, Ji and Lee \cite{ji2010Regularity} considered conditions on planar components $(u_h,b_h)$ of $(u,b)$ or conditions on $u_h$ and $b_3$. Jia and Zhou \cite{jia2012Regularity} studied conditions on $u_3$, $b$ and $\partial_3 u_h$. Other results of this type can be found, e.g., in \cite{jia2015New,jia2016Regularity}.

In 1982, Caffarelli, Kohn and Nirenberg \cite{caffarelli1982Partial} introduced the notion of suitable weak solutions of 3D incompressible Navier--Stokes equations, which were defined as weak solutions $(u,p) \in (L_t^\infty L_x^2 \cap L_t^2 H^1) \times L^{3/2}$ that satisfy the local energy inequality. By showing the regularity of suitable weak solutions at any space-time point under the smallness assumptions of certain scaled energy quantities near that point (these are known as the $\varepsilon$-regularity results), they proved that the one-dimensional Hausdorff measure of the singular set is $0$. Later on, Lin \cite{lin1998New} also gave a simplified proof. Their $\varepsilon$-regularity criteria are stated as follows:

\vspace{0.4em}

\noindent \textbf{Theorem I.}
Suppose $(u,p)$ is a suitable weak solution to \eqref{eq: NS} in the neighbourhood of $(0,0)$. There exists a small positive constant $\varepsilon$, such that if either
\begin{equation*}
    \int_{-1}^0 \int_{B_1} |u|^3+|p|^{3/2} \, \d x \d t < \varepsilon
\end{equation*}
or
\begin{equation*}
    \varlimsup_{r \to 0} \frac{1}{r} \int_{-r^2}^0 \int_{B_r} |\nabla u|^2 \, \d x \d t < \varepsilon
\end{equation*}
holds, then $(0,0)$ is a regular point.

\vspace{0.4em}

Here and in what follows, a space-time point $(x_0,t_0)$ is said to be regular if the solution is bounded in $Q_r(x_0,t_0)=B_r(x_0) \times (t_0-r^2,t_0)$ for some $r>0$, otherwise it is called a singular point. We'd also like to remark that the first $\varepsilon$-regularity criterion in Theorem I requires only one radius (thus the name ``one scale criterion''), i.e., the assumption is only made in $Q_1$ (or other $Q_{r_0}$ for some fixed $r_0$), whereas the second assumption is required to hold for any small $r$ (``all scales'').

The methods of \cite{caffarelli1982Partial,lin1998New} were widely adopted in local regularity theories of Navier--Stokes equations, see, e.g., \cite{tian1999Gradient,ladyzhenskaya1999Partial,seregin2002Local,seregin2004Boundary,seregin2007Local}, and we recommend readers to refer to the monograph \cite{seregin2014Lecture} for a detailed instruction. Subsequent papers also made attempts to show the local regularity under the smallness assumptions on other scaled quantities, especially the scaled mixed $L_t^s L_x^q$ norms, which generalised the corresponding results of \cite{caffarelli1982Partial,lin1998New}. As far as we know, one of an early attempts was due to He \cite{he2004Partial}, who proved that any suitable weak solution $u$ to \eqref{eq: NS} is bounded near $(0,0)$, provided that either the scaled $L^q$ norm ($1 \leq q \leq 10/3$) of $u$ or the scaled $L^q$ norm ($1 \leq q \leq 2$) of $\nabla u$ is sufficiently small in $Q_r$ for any small $r$. Later, Zajaczkowski and Seregin \cite{zajaczkowski2006Sufficient} considered the case where the scaled $L_t^s L_x^q$ norm of $u$ is small at all scales, with $1 \leq s,q \leq \infty$ satisfying
\begin{equation}
    \label{eq: 2/s+3/q}
    \frac{1}{2} + \max \bigg\{ \frac{1}{2s}, \frac{1}{2}-\frac{1}{q}, \frac{1}{q}-\frac{1}{6} \bigg\} < \frac{2}{s}+\frac{3}{q}-1 \leq 1;
\end{equation}
whereas Gustafson et al. \cite{gustafson2007Interior} dropped the lower bound assumption of $2/s+3/q-1$ in \eqref{eq: 2/s+3/q}. \cite{gustafson2007Interior} also proved the local regularity under the condition that the scaled $L_t^s L_x^q(Q_r)$ norm ($0 \leq 2/s+3/q-2 \leq 1$, $1 \leq s \leq \infty$) of $\nabla u$ or the scaled $L_t^s L_x^q(Q_r)$ norm ($0 \leq 2/s+3/q-2 \leq 1$, $1 \leq s \leq \infty$, $(s,q) \neq (\infty,1)$) of $\curl u$ is sufficiently small for arbitrary small $r$. For other $\varepsilon$-regularity criteria related to \eqref{eq: NS}, either in terms of scaled usual or anisotropic $L_t^s L_x^q$ norms, and either at one scale or all scales, we refer readers to \cite{wang2014Interior,wang2016Anisotropic,he2019New,wang2020ERegularity,li2024Remarks} and references therein.

Similar results have also been obtained for suitable weak solutions to MHD equations, see, e.g., \cite{he2005Partial,vyalov2008Partial,kang2009Interior,wang2013Interior}. To be specific, the triplet $(u,b,p)$ is said to be a suitable weak solution to \eqref{eq: MHD} in the unit parabolic ball $Q_1=Q_1(0,0)$, if

\vspace{0.3em}

(i) $u,b \in L_t^\infty L_x^2 (Q_1)$, $\nabla u, \nabla b \in L^2(Q_1)$, $p \in L^{3/2}(Q_1)$;

(ii) $(u,b,p)$ satisfies \eqref{eq: MHD} in $Q_1$ in the sense of distribution;

(iii) for a.e. $t \in (-1,0)$, the local energy inequality
\begin{equation}
    \label{eq: LEQ}
    \begin{split}
    &\int_{B_1} \phi(x,t) \big( |u(x,t)|^2+|b(x,t)|^2 \big) \d x + 2 \int_{-1}^t \int_{B_1} \phi \big( |\nabla u|^2+|\nabla b|^2 \big) \d x \d \tau \\
    \leq& \int_{-1}^t \int_{B_1} \big( |u|^2+|b|^2 \big) \big( \partial_t \phi+\Delta \phi \big) \d x \d \tau + \int_{-1}^t \int_{B_1} \big( u \cdot \nabla \phi \big) \big( |u|^2+|b|^2+2p \big) \d x \d \tau \\
    &- 2 \int_{-1}^t \int_{B_1} \big( b \cdot \nabla \phi \big) \big( u \cdot b \big) \d x \d \tau
    \end{split}
\end{equation}
holds for any smooth non-negative function $\phi$ vanishing in the vicinity of the parabolic boundary of $Q_1$. To illustrate related results for suitable weak solutions to \eqref{eq: MHD} and our results in this paper, define the following energy quantities which are invariant under the natural scaling $u^\lambda(x,t)=\lambda u(\lambda x,\lambda^2 t)$, $b^\lambda(x,t)=\lambda b(\lambda x,\lambda^2 t)$, $p^\lambda(x,t)=\lambda^2 p(\lambda x,\lambda^2 t)$:
\begin{gather*}
    A(f;r):=\frac{1}{r} \sup_{-r^2<t<0} \int_{B_r} |f(x,t)|^2 \d x, \quad E(f;r):=\frac{1}{r} \iint_{Q_r} |\nabla f|^2 \d x \d t, \\
    C(f;r):=\frac{1}{r^2} \iint_{Q_r} |f|^3 \d x \d t, \quad H(f;r):=\frac{1}{r^3} \iint_{Q_r} |f|^2 \d x \d t
\end{gather*}
for $f=u$ or $b$, and
\begin{equation*}
    D(p;r):=\frac{1}{r^2} \iint_{Q_r} \big| p-[p]_{B_r} \big|^{3/2} \, \d x \d t;
\end{equation*}
also, denote the scaled $L_t^s L_x^q$ norms
\begin{equation*}
    G(f,s,q;r):=r^{1-2/s-3/q} \|f\|_{L_t^s L_x^q(Q_r)}
\end{equation*}
for $f=u$ or $b$, and
\begin{equation*}
    K(f,s,q;r):=r^{2-2/s-3/q} \|f\|_{L_t^s L_x^q(Q_r)}
\end{equation*}
for $f=\nabla u$, $\nabla b$, $\curl u$ or $\curl b$. Here $[\cdot]_{B_r}$ denotes the mean value over the ball $B_r$. For convenience, let $A(u,b;r)=A(u;r)+A(b;r)$ and $E(u,b;r)$, $C(u,b;r)$, $H(u,b;r)$, etc. denote similar notations. When no confusion arises, we also abbreviate $G(f,s,q;r)$ and $K(f,s,q;r)$ as $G(f;r)$ and $K(f;r)$, respectively.

Following similar arguments, the CKN criteria for \eqref{eq: NS} in \cite{caffarelli1982Partial,lin1998New} can be extended to \eqref{eq: MHD}. See Vyalov \cite{vyalov2008Partial}, where it was shown that the smallness of $C(u,b;1)+D(p;1)$ or $\varlimsup_{r \to 0} E(u,b;r)$ or $\varlimsup_{r \to 0} A(u,b;r)$ guarantees the local regularity. Moreover, He and Xin \cite{he2005Partial} obtained the all scales $\varepsilon$-regularity criteria analogous to Navier--Stokes equations in \cite{tian1999Gradient}. Their statement reads as follows:

\vspace{0.4em}

\noindent \textbf{Theorem II.}
(\cite[Proposition 7.1]{he2005Partial}) \ Suppose $(u,b,p)$ is a suitable weak solution to \eqref{eq: MHD} in $Q_1$. There exists a small positive constant $\varepsilon$, such that if one of the following conditions holds:

\vspace{0.1em}

\noindent (i) Either $\varlimsup_{r \to 0} A(u,b;r)<\infty$ or $\varlimsup_{r \to 0} E(u,b;r)<\infty$, and $\varlimsup_{r \to 0} H(u,b;r)<\varepsilon$;

\vspace{0.1em}

\noindent (ii) $\varlimsup_{r \to 0} E(u,b;r)<\varepsilon$;

\vspace{0.1em}

\noindent (iii) $\varlimsup_{r \to 0} r^{1-5/q} (\|u\|_{L^q(Q_r)}+\|b\|_{L^q(Q_r)})<\varepsilon$ for some $5/2 \leq q \leq 10/3$,

\vspace{0.1em}

\noindent then $(0,0)$ is a regular point.

\vspace{0.4em}

By deriving a series of estimates on the scaled energy quantities which reduce their cases to Theorem II, \cite[Theorems 2.1--2.3]{he2005Partial} further showed the local regularity in the cases where certain scaled energy quantities related to $u$ are small, and certain scaled energy quantities related to $b$ are bounded, for arbitrary small $r$. That again implies the dominance of $u$ in the regularity theory.

Regarding $\varepsilon$-regularity criteria in terms of the scaled $L_t^s L_x^q$ norms, Kang and Lee \cite{kang2009Interior} proved that

\vspace{0.4em}

\noindent \textbf{Theorem III.}
Suppose $(u,b,p)$ is a suitable weak solution to \eqref{eq: MHD} in $Q_1$. For any $\mathcal{N}>0$, there exists a small positive constant $\varepsilon=\varepsilon(\mathcal{N})$, such that if one of the following three conditions holds for $b$:
\vspace{0.1em}

\noindent (i) $\varlimsup_{r \to 0} G(b,s_2,q_2;r)<\mathcal{N}$ for some $s_2,q_2$ satisfying $0 \leq 2/s_2+3/q_2-1 \leq 1$, $1 \leq s_2 \leq \infty$;

\vspace{0.1em}

\noindent (ii) $\varlimsup_{r \to 0} K(\nabla b,s_2,q_2;r)<\mathcal{N}$ for some $s_2,q_2$ satisfying $0 \leq 2/s_2+3/q_2-2 \leq 1$, $1 \leq s_2 \leq \infty$;

\vspace{0.1em}

\noindent (iii) $\varlimsup_{r \to 0} K(\curl b,s_2,q_2;r)<\mathcal{N}$ for some $s_2,q_2$ satisfying $0 \leq 2/s_2+3/q_2-2 \leq 1$, $1 \leq s_2 \leq \infty$ and $(s_2,q_2) \neq (\infty,1)$;

\vspace{0.1em}

\noindent and one of the following three conditions holds for $u$:
\vspace{0.1em}

\noindent (i') $\varlimsup_{r \to 0} G(u,s_1,q_1;r)<\varepsilon$ for some $s_1,q_1$ satisfying $0 \leq 2/s_1+3/q_1-1 \leq 1$, $1 \leq s_1 \leq \infty$;

\vspace{0.1em}

\noindent (ii') $\varlimsup_{r \to 0} K(\nabla u,s_1,q_1;r)<\varepsilon$ for some $s_1,q_1$ satisfying $0 \leq 2/s_1+3/q_1-2 \leq 1$, $1 \leq s_1 \leq \infty$;

\vspace{0.1em}

\noindent (iii') $\varlimsup_{r \to 0} K(\curl u,s_1,q_1;r)<\varepsilon$ for some $s_1,q_1$ satisfying $0 \leq 2/s_1+3/q_1-2 \leq 1$, $1 \leq s_1 \leq \infty$ and $(s_1,q_1) \neq (\infty,1)$;

\vspace{0.1em}

\noindent then $(0,0)$ is a regular point.

\vspace{0.4em}

\cite{kang2009Interior} contained much of the results in \cite{he2005Partial} as special cases, and the proof was based on delicate iterations involving several auxiliary scaling invariant quantities. One open question proposed in \cite{kang2009Interior} was whether the assumption on $b$ can be dropped. The following theorem shown by Wang and Zhang \cite{wang2013Interior} gave a positive answer.

\vspace{0.4em}

\noindent \textbf{Theorem IV.}
Suppose $(u,b,p)$ is a suitable weak solution to \eqref{eq: MHD} in $Q_1$. There exists a small positive constant $\varepsilon$, such that if one of the following three conditions holds:

\vspace{0.1em}

\noindent (i) $\varlimsup_{r \to 0} G(u,s,q;r)<\varepsilon$ for some $s,q$ satisfying $0 \leq 2/s+3/q-1 \leq 1$, $1 \leq s \leq \infty$ and $(s,q) \neq (1,\infty)$;

\vspace{0.1em}

\noindent (ii) $\varlimsup_{r \to 0} K(\curl u,s,q;r)<\varepsilon$ for some $s,q$ satisfying $0 \leq 2/s+3/q-2 \leq 1$, $2<s \leq \infty$ and $(s,q) \neq (\infty,1)$;

\vspace{0.1em}

\noindent (iii) $\varlimsup_{r \to 0} K(\curl u,s,q;r)<\varepsilon$ for some $s,q$ satisfying $0 \leq 2/s+3/q-2 \leq 1$, $1<s \leq 2$, and $\sup_{0<r<r_0} G(u,s_0,1,r)<\infty$ for some $r_0 \leq 1$ and $s_0>2$,

\vspace{0.1em}

\noindent then $(0,0)$ is a regular point.

\vspace{0.4em}

The key point in \cite{wang2013Interior} to get rid of the assumption on $b$ is to exploit the term $\iint (u \cdot \nabla \phi) (|b|^2+2p)$ in the local energy inequality \eqref{eq: LEQ} so that $b \cdot \nabla b$ can be controlled in terms of the scaling invariant quantities related to $u$.

The interior $\varepsilon$-regularity criteria in \cite{vyalov2008Partial,kang2009Interior,wang2013Interior} can also be extended to the boundary cases, see \cite{vyalov2010Boundary,vyalov2011Local,vyalov2014Regularity,kang2014Boundary,kim2016Local}, and references therein. It is worth mentioning that Kang and Kim \cite{kang2014Boundary} also gave a different proof of Theorem IV independently.

In this paper, we aim to derive $\varepsilon$-regularity criteria for \eqref{eq: MHD} in terms of the classical scaled energy quantities $A,E,C$ which arise naturally from the local energy inequality, and then to obtain equivalent $\varepsilon$-regularity criteria formulated in terms of the scaled $L_t^s L_x^q$ norms, via suitable interpolation and embedding arguments. As an intermediate step, we establish a characterisation of Type I singularities within the same framework, which is itself of independent interest. Consider the quantity
\begin{equation*}
    g(u):=\min \bigg\{ \varlimsup_{r \to 0} A(u;r), \, \varlimsup_{r \to 0} E(u;r), \, \varlimsup_{r \to 0} C(u;r) \bigg\}.
\end{equation*}
It is known that for suitable weak solutions to \eqref{eq: NS} in $Q_1$, if $g$ is sufficiently small, then $(0,0)$ is a regular point. See, for instance, Seregin \cite{seregin2007Local}. Motivated by this, we are going to show the following theorem.
\begin{theorem}
\label{thm: smallness}
Suppose $(u,b,p)$ is a suitable weak solution to \eqref{eq: MHD} in $Q_1$. If $g(b)<\mathcal{N}<\infty$, and there exists a small positive constant $\varepsilon=\varepsilon(\mathcal{N})$, such that $g(u)<\varepsilon$, then $(0,0)$ is a regular point.
\end{theorem}
Theorem \ref{thm: smallness} is a natural extension of analogous result on Navier--Stokes equations, and allows flexibility in that the assumptions imposed on the scaled energy quantities of $u$ and $b$ are chosen independently.

A natural question is that, instead of the conditions in Theorem \ref{thm: smallness}, whether the boundedness of $g(u)+g(b)$ allows blowup or not. This is still open, even for Navier--Stokes equations. Nevertheless, it can be proved that the boundedness condition can rule out Type II singularities. By the definition in \cite{seregin2010Weak}, a singular point, say $(0,0)$, of a suitable weak solution $(u,p)$ to \eqref{eq: NS}, is said to be of Type I, if
\begin{equation*}
    \varlimsup_{r \to 0} \, [A(u;r)+E(u;r)+C(u;r)+D(p;r)]<\infty,
\end{equation*}
otherwise it is said to be of Type II. It has been shown in \cite{seregin2006Estimates} that for a suitable weak solution $(u,p)$ to \eqref{eq: NS}, the condition $g(u)<\infty$ ensures that the potential singular point $(0,0)$ can only be of Type I. Inspired by that, we'd like to say a singular point $(0,0)$ of a suitable weak solution $(u,b,p)$ to \eqref{eq: MHD} is of Type I, if
\begin{equation}
    \label{eq: TypeI}
    \varlimsup_{r \to 0} \, [A(u,b;r)+E(u,b;r)+C(u,b;r)+D(p;r)]<\infty
\end{equation}
We will show the following result analogous to \cite{seregin2006Estimates}, which characterises Type I singularities of \eqref{eq: MHD} under much weaker assumptions.

\begin{theorem}
\label{thm: boundedness}
Suppose $(u,b,p)$ is a suitable weak solution of \eqref{eq: MHD} in $Q_1$. If $g(u)<\infty$ and $g(b)<\infty$, then \eqref{eq: TypeI} holds.
\end{theorem}
As mentioned before, Theorem \ref{thm: smallness} recovers Theorem III proved by Kang and Lee \cite{kang2009Interior}, as a consequence of interpolation and embedding, which we will discuss in Section \ref{sec: mixed}. In Section \ref{sec: mixed}, similar arguments also lead to the following characterisation of Type I singularities in terms of the scaled $L_t^s L_x^q$ norms.
\begin{theorem}
\label{thm: boundedness_mixed}
Suppose $(u,b,p)$ is a suitable weak solution to \eqref{eq: MHD} in $Q_1$. If one of the following three conditions holds for $b$:

\vspace{0.1em}

\noindent (i) $\varlimsup_{r \to 0} G(b,s_2,q_2;r)<\infty$ for some $s_2,q_2$ satisfying $0 \leq 2/s_2+3/q_2-1 \leq 1$, $1 \leq s_2 \leq \infty$;

\vspace{0.1em}

\noindent (ii) $\varlimsup_{r \to 0} K(\nabla b,s_2,q_2;r)<\infty$ for some $s_2,q_2$ satisfying $0 \leq 2/s_2+3/q_2-2 \leq 1$, $1 \leq s_2 \leq \infty$;

\vspace{0.1em}

\noindent (iii) $\varlimsup_{r \to 0} K(\curl b,s_2,q_2;r)<\infty$ for some $s_2,q_2$ satisfying $0 \leq 2/s_2+3/q_2-2 \leq 1$, $1 \leq s_2 \leq \infty$ and $(s_2,q_2) \neq (\infty,1)$;

\vspace{0.1em}

\noindent and one of the following three conditions holds for $u$:

\vspace{0.1em}

\noindent (i') $\varlimsup_{r \to 0} G(u,s_1,q_1;r)<\infty$ for some $s_1,q_1$ satisfying $0 \leq 2/s_1+3/q_1-1<1$, $1 \leq s_1 \leq \infty$;

\vspace{0.1em}

\noindent (ii') $\varlimsup_{r \to 0} K(\nabla u,s_1,q_1;r)<\infty$ for some $s_1,q_1$ satisfying $0 \leq 2/s_1+3/q_1-2<1$, $1<s_1 \leq \infty$;

\vspace{0.1em}

\noindent (iii') $\varlimsup_{r \to 0} K(\curl u,s_1,q_1;r)<\infty$ for some $s_1,q_1$ satisfying $0 \leq 2/s_1+3/q_1-2<1$, $1<s_1 \leq \infty$;

\vspace{0.1em}

\noindent then \eqref{eq: TypeI} holds.
\end{theorem}

We do not claim to improve existing $\varepsilon$-regularity criteria in the literature. Nevertheless, our formulation in terms of the scaled energy quantities $A,E$ and $C$ provides a direct and transparent approach compared to the arguments in \cite{kang2009Interior} and \cite{wang2013Interior}, and it is naturally compatible with the Type I framework. In particular, merely replacing the smallness parameter $\varepsilon$ in the proofs of \cite{kang2009Interior} and \cite{wang2013Interior} by an arbitrary finite bound does not lead to a Type I characterisation, whereas our approach yields such a result in a unified way. We believe that this Type I description is of independent interest, and it also offers a natural starting point for the study of Type II blowup, in the spirit of Seregin's recent works on the Navier--Stokes equations \cite{seregin2024Remarks,seregin2024Note}.

The remainder of the paper is organised as follows. In Section \ref{sec: dimensionless}, we will derive some dimensionless estimates that will be useful in subsequent discussions. Based on these dimensionless estimates and standard iteration arguments, we will show Theorem \ref{thm: boundedness} in Section \ref{sec: boundedness}, and by exploiting the decay of the heat kernel that appears in the integral representation of the magnetic field $b$, we will prove Theorem \ref{thm: smallness} in Section \ref{sec: smallness}. Finally, in Section \ref{sec: mixed}, we will show how the $\varepsilon$-regularity criteria and the characterisation of Type I singularities formulated in terms of $A,E$ and $C$ extend naturally to the scaled $L_t^s L_x^q$ norms framework and give a proof of Theorem III and Theorem \ref{thm: boundedness_mixed}.

\section{Some dimensionless estimates}
\label{sec: dimensionless}
In this section, we present several estimates of the scaled energy quantities which will be useful in later derivations. Let $(u,b,p)$ be a suitable weak solution of \eqref{eq: MHD} in $Q_1$. First we have the following variation of the local energy inequality:
\begin{equation}
    \label{eq: LEQ1}
    A(u,b;r)+E(u,b;r) \lesssim H(u,b;2r)+C(u;2r)+\frac{1}{r^2} \iint_{Q_{2r}} |b|^2 |u| + \frac{1}{r^2} \iint_{Q_{2r}} |p-[p]_{B_{2r}}||u|
\end{equation}
for any $0<r<2r \leq 1$, which can be obtained by selecting $\phi$ in \eqref{eq: LEQ} such that $\phi=0$ in the vicinity of the parabolic boundary of $Q_{2r}$, $\phi=1$ in $Q_r$, and $|\nabla \phi| \lesssim r^{-1}$, $|\partial_t \phi|+|\nabla^2 \phi| \lesssim r^{-2}$ in $Q_{2r}$.

The following lemma gives estimates for $C$ in terms of other scaled energy quantities.
\begin{lemma}
For $f=u$ or $b$ and any $0<r \leq \rho$, we have
\begin{gather}
    C(f;r) \lesssim A^{1/2}(f;r) [H^{1/4}(f;r) E^{3/4}(f;r)+H(f;r)], \label{eq: C<A(HE+H)} \\
    C(f;r) \lesssim \bigg(\frac{r}{\rho}\bigg)^3 A^{3/2}(f;\rho) + \bigg(\frac{\rho}{r}\bigg)^{3/2} A^{3/4}(f;\rho) E^{3/4}(f;\rho). \label{eq: C<A+AE}
\end{gather}
\end{lemma}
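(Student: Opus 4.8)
The plan is to prove both interpolation inequalities by applying the three-dimensional Gagliardo--Nirenberg inequality on spatial balls slice-by-slice in time, and then integrating in $t$ with H\"older's inequality, exploiting that $A(f;\cdot)$ controls the supremum in time of the spatial $L^2$-norm of $f$, while $E(f;\cdot)$ and $H(f;\cdot)$ control the time integrals of $\|\nabla f\|_{L^2}^2$ and $\|f\|_{L^2}^2$. The basic spatial estimate I would use is, for $g\in H^1(B_r)$,
\[ \|g\|_{L^3(B_r)} \lesssim \|\nabla g\|_{L^2(B_r)}^{1/2}\|g\|_{L^2(B_r)}^{1/2} + r^{-1/2}\|g\|_{L^2(B_r)}, \]
whose two terms are each scale-consistent with $\int_{B_r}|g|^3$.

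For \eqref{eq: C<A(HE+H)}, I would cube this inequality with $g=f(\cdot,t)$ and integrate over $t\in(-r^2,0)$, obtaining $\iint_{Q_r}|f|^3 \lesssim \int_{-r^2}^0 \|\nabla f\|_{L^2(B_r)}^{3/2}\|f\|_{L^2(B_r)}^{3/2}\,\d t + r^{-3/2}\int_{-r^2}^0 \|f\|_{L^2(B_r)}^3\,\d t$. In the first term I would peel off one half power via the supremum bound $\|f\|_{L^2(B_r)}^{1/2}\le (rA(f;r))^{1/4}$, then apply H\"older in time pairing $\|\nabla f\|_{L^2(B_r)}^{3/2}$ (in $L_t^{4/3}$) with $\|f\|_{L^2(B_r)}$ (in $L_t^{4}$); this produces $(rE)^{3/4}$ together with $\big(\int_{-r^2}^0\|f\|_{L^2(B_r)}^4\,\d t\big)^{1/4}$, and the latter is controlled by $\big(\sup_t\|f\|_{L^2(B_r)}^2\cdot\int_{-r^2}^0\|f\|_{L^2(B_r)}^2\,\d t\big)^{1/4}=(rA\cdot r^3H)^{1/4}$. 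Collecting powers of $r$ and dividing by $r^2$ yields the $A^{1/2}H^{1/4}E^{3/4}$ contribution, while the lower-order term is handled by $\|f\|_{L^2(B_r)}^3\le \sup_t\|f\|_{L^2(B_r)}\cdot\|f\|_{L^2(B_r)}^2$, giving the $A^{1/2}H$ contribution.

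The inequality \eqref{eq: C<A+AE} is more delicate, because the factor $(r/\rho)^3$ in the first term is a genuine gain (recall $r\le\rho$) that cannot be produced by naively enlarging the domain from $B_r$ to $B_\rho$: doing so and using the supremum bound would only give $(\rho/r)^{3/2}A^{3/2}$ for the constant part, which is far too large. The key is therefore to split $f=(f-[f]_{B_\rho})+[f]_{B_\rho}$ at each time. For the oscillation $h:=f-[f]_{B_\rho}$, which has zero mean on $B_\rho$, I would use the homogeneous Sobolev--Poincar\'e interpolation $\|h\|_{L^3(B_\rho)}\lesssim \|h\|_{L^2(B_\rho)}^{1/2}\|\nabla h\|_{L^2(B_\rho)}^{1/2}$ (the lower-order term being absent thanks to the zero-mean condition), bound $\|h\|_{L^2(B_\rho)}\le\|f\|_{L^2(B_\rho)}$, enlarge $B_r\subset B_\rho$, and integrate in time exactly as above to obtain the $(\rho/r)^{3/2}A^{3/4}E^{3/4}$ term. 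For the constant part, its contribution to $\int_{B_r}$ carries the small volume factor $|B_r|\sim r^3$: using Cauchy--Schwarz $|[f]_{B_\rho}|\lesssim \rho^{-3/2}\|f\|_{L^2(B_\rho)}\lesssim \rho^{-3/2}(\rho A)^{1/2}$ and integrating over $t\in(-r^2,0)$ produces precisely $(r/\rho)^3A^{3/2}$ after dividing by $r^2$. The main obstacle is exactly this bookkeeping: one must isolate the mean value and account separately for the volume gain on $B_r$ against the gradient gain from the zero-mean Sobolev--Poincar\'e inequality, since only this splitting delivers the sharp $(r/\rho)^3$ prefactor needed for the iteration arguments in the later sections.
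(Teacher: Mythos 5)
Your argument for \eqref{eq: C<A(HE+H)} is correct and essentially identical to the paper's: the same ball-wise Gagliardo--Nirenberg inequality \eqref{eq: GN}, the same peeling of the time-supremum of the spatial $L^2$ norm, and the same H\"older pairing in time (you bound $\int_t\|f\|_{L^2}^4$ by $\sup_t\|f\|_{L^2}^2\cdot\int_t\|f\|_{L^2}^2$ after H\"older, the paper peels the supremum before H\"older; the outcome is the same). For \eqref{eq: C<A+AE} your route is genuinely different. The paper, following \cite{lin1998New}, applies \eqref{eq: GN} on $B_r$ and then controls $\int_{B_r}|f|^2$ by splitting the \emph{scalar} $|f|^2$ into its mean over $B_\rho$ plus oscillation, via the $W^{1,1}\hookrightarrow L^{3/2}$ Poincar\'e inequality and $|\nabla|f|^2|\lesssim|f||\nabla f|$, substituting the result into the lower-order term $r^{-3/2}\big(\int_{B_r}|f|^2\big)^{3/2}$. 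You instead split the \emph{vector} $f=(f-[f]_{B_\rho})+[f]_{B_\rho}$, estimate the oscillation by the zero-mean Sobolev--Poincar\'e interpolation $\|h\|_{L^3(B_\rho)}\lesssim\|h\|_{L^2(B_\rho)}^{1/2}\|\nabla h\|_{L^2(B_\rho)}^{1/2}$ (valid with a $\rho$-independent constant by scaling), and charge the volume factor $r^3$ only to the constant part. Both are sound, but they do not give the same prefactor on the gradient term: in the paper's computation the substituted cross term evaluates to $(\rho/r)^{3/2}\cdot(\rho A)^{3/4}\cdot r^{1/2}\cdot(\rho E)^{3/4}=(\rho/r)^{3}r^{2}A^{3/4}E^{3/4}$, although the final display records it as $(\rho/r)^{3/2}r^{2}A^{3/4}E^{3/4}$; this is an algebraic slip (indeed $\rho^{3/2}r^{1/2}=(\rho/r)^{3/2}r^{2}$, so an extra factor $(\rho/r)^{3/2}$ is silently dropped), and the paper's own method as written only yields \eqref{eq: C<A+AE} with the classical exponent $(\rho/r)^{3}$ of Lin's lemma. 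Your splitting never routes the oscillation estimate through $B_r$, avoids that cross term entirely, and genuinely proves the sharper stated factor $(\rho/r)^{3/2}$. For the later iterations in Sections \ref{sec: boundedness}--\ref{sec: smallness} either exponent suffices, since there $\rho/r$ is a fixed constant $\theta^{-1}$, but your proof is the one that matches the lemma as stated.
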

\begin{proof}
The proofs can be found in \cite[Lemma 4.1]{he2005Partial} and \cite[Lemma 2.1]{lin1998New}, respectively, but for the completeness, we still present them here. By Gagliardo--Nirenberg inequality, we have
\begin{equation}
    \label{eq: GN}
    \int_{B_r} |f|^3 \d x \lesssim \bigg( \int_{B_r} |f|^2 \d x \bigg)^{3/4} \bigg( \int_{B_r} |\nabla f|^2 \d x \bigg)^{3/4} + r^{-3/2} \bigg( \int_{B_r} |f|^2 \d x \bigg)^{3/2}.
\end{equation}
Integrating in time, we obtain by H\"older's inequality that
\begin{equation*}
    \begin{split}
    \iint_{Q_r} |f|^3 \lesssim& \sup_t \bigg( \int_{B_r} |f|^2 \d x \bigg)^{1/2} \bigg( \iint_{Q_r} |f|^2 \bigg)^{1/4} \bigg( \iint_{Q_r} |\nabla f|^2 \bigg)^{3/4} \\
    &+ r^{-3/2} \sup_t \bigg( \int_{B_r} |f|^2 \d x \bigg)^{1/2} \iint_{Q_r} |f|^2 \\
    \lesssim& r^2 A^{1/2}(f;r) [H^{1/4}(f;r) E^{3/4}(f;r)+H(f;r)],
    \end{split}
\end{equation*}
which implies \eqref{eq: C<A(HE+H)}. On the other hand, by Poincar\'e's inequality, we have
\begin{equation*}
    \begin{split}
    \int_{B_r} |f|^2 \d x \lesssim& \rho \bigg(\int_{B_\rho} \big| |f|^2-[|f|^2]_{B_\rho} \big|^{3/2} \, \d x\bigg)^{2/3} + \int_{B_r} [|f|^2]_{B_\rho} \d x \\
    \lesssim& \rho \int_{B_\rho} |f| |\nabla f| \d x + \bigg(\frac{r}{\rho}\bigg)^3 \int_{B_\rho} |f|^2 \d x \\
    \lesssim& \rho \bigg(\int_{B_\rho} |f|^2 \d x\bigg)^{1/2} \bigg(\int_{B_\rho} |\nabla f|^2 \d x\bigg)^{1/2} + \bigg(\frac{r}{\rho}\bigg)^3 \int_{B_\rho} |f|^2 \d x.
    \end{split}
\end{equation*}
Applying this to the last term of \eqref{eq: GN}, and integrating in time, we obtain by H\"older's inequality that
\begin{equation*}
    \begin{split}
    \iint_{Q_r} |f|^3 \lesssim& \sup_t \bigg(\int_{B_r} |f|^2 \d x\bigg)^{3/4} \cdot r^{1/2} \bigg(\iint_{Q_r} |\nabla f|^2 \bigg)^{3/4} \\
    &+ \bigg(\frac{\rho}{r}\bigg)^{3/2} \sup_t \bigg(\int_{B_\rho} |f|^2 \d x\bigg)^{3/4} \cdot r^{1/2} \bigg(\iint_{Q_\rho} |\nabla f|^2 \bigg)^{3/4} \\
    &+ r^{-3/2} \cdot \bigg(\frac{r}{\rho}\bigg)^{9/2} \cdot r^2 \sup_t \bigg(\int_{B_\rho} |f|^2 \d x\bigg)^{3/2} \\
    \lesssim& \bigg[ \rho^{3/2} r^{1/2} + \bigg(\frac{\rho}{r}\bigg)^{3/2} r^2 \bigg] A^{3/4}(f;\rho) E^{3/4}(f;\rho) + \bigg(\frac{r}{\rho}\bigg)^3 r^2 A^{3/2}(f;\rho),
    \end{split}
\end{equation*}
which implies \eqref{eq: C<A+AE}.
\end{proof}

Next we need to derive some decay estimates of $D(p;r)$. Let $0<r<2r \leq \rho \leq 1$. By taking the divergence of \eqref{eq: MHD}$_1$, we get
\begin{equation}
    -\Delta p(\cdot,t) = \divergence \divergence (u \otimes u) - \divergence \divergence (b \otimes b) \quad \text{in $B_\rho$}
\end{equation}
in the sense of distribution for a.e. $t \in (-\rho^2,0)$. Decompose $p$ as $p=p_1+p_2+p_3$, where for a.e. $t \in (-\rho^2,0)$,
\begin{equation}
    \label{eq: Delta(p1p2)}
    \begin{gathered}
    \int_{B_\rho} p_1(x,t) \Delta \phi(x) \d x = -\int_{B_\rho} (u \otimes u) : \nabla^2 \phi \d x, \\
    \int_{B_\rho} p_2(x,t) \Delta \phi(x) \d x = \int_{B_\rho} (b \otimes b) : \nabla^2 \phi \d x
    \end{gathered}
\end{equation}
for any $\phi \in W^{2,3}(B_\rho)$ with $\phi|_{\partial B_\rho}=0$, and
\begin{equation*}
    \Delta p_3(\cdot,t)=0 \quad \text{in $B_\rho$}
\end{equation*}
in the sense of distribution. By Calder\'on--Zygmund estimate, we have
\begin{equation}
    \label{eq: p1p2(1)}
    \int_{B_\rho} |p_1|^{3/2} \d x \lesssim \int_{B_\rho} |u \otimes u|^{3/2} \d x \lesssim \int_{B_\rho} |u|^3 \d x, \quad \int_{B_\rho} |p_2|^{3/2} \d x \lesssim \int_{B_\rho} |b|^3 \d x,
\end{equation}
and as has been shown in \cite{seregin2006Estimates}, by the harmonicity of $p_3$ in $B_\rho$ we have
\begin{equation*}
    \begin{split}
    \sup_{x \in B_r} \big| p_3(x,t)-[p_3]_{B_r}(t) \big| \lesssim& r \sup_{x \in B_{\rho/2}} |\nabla p_3(x,t)| \lesssim r \cdot \frac{1}{\rho^4} \int_{B_\rho} \big| p_3(x,t)-[p_3]_{B_\rho}(t) \big| \\
    \lesssim& \frac{r}{\rho} \cdot \frac{1}{\rho^2} \bigg( \int_{B_\rho} \big| p_3(x,t) - [p_3]_{B_\rho}(t) \big|^{3/2} \, \d x \bigg)^{2/3}.
    \end{split}
\end{equation*}
Therefore, by \eqref{eq: p1p2(1)} we have
\begin{equation}
    \label{eq: p3}
    \begin{split}
    D(p_3;r) \lesssim& r \int_{-r^2}^0 \sup_{x \in B_r} \big| p_3(x,t)-[p_3]_{B_r}(t) \big|^{3/2} \d t \\
    \lesssim& \bigg(\frac{r}{\rho}\bigg)^{5/2} \cdot \frac{1}{\rho^2} \iint_{Q_\rho} \big| p_3(x,t)-[p_3]_{B_\rho}(t) \big|^{3/2} \\
    =& \bigg(\frac{r}{\rho}\bigg)^{5/2} D(p_3;\rho) \lesssim \bigg(\frac{r}{\rho}\bigg)^{5/2} [D(p;\rho)+D(p_1;\rho)+D(p_2;\rho)] \\
    \lesssim& \bigg(\frac{r}{\rho}\bigg)^{5/2} [D(p;\rho)+C(u,b;\rho)].
    \end{split}
\end{equation}
Combining \eqref{eq: p1p2(1)} with \eqref{eq: p3}, we obtain
\begin{equation}
    \label{eq: p(1)}
    D(p;r) \lesssim \bigg(\frac{r}{\rho}\bigg)^{5/2} D(p;\rho) + \bigg(\frac{\rho}{r}\bigg)^2 C(u,b;\rho).
\end{equation}

In this paper, we need some other decay estimates of $D(p;r)$. By replacing $u \otimes u$ on the right hand side of \eqref{eq: Delta(p1p2)}$_1$ with $(\tilde{u} \otimes \tilde{u}-[\tilde{u} \otimes \tilde{u}]_{B_\rho})$, where $\tilde{u}:=u-[u]_{B_\rho}$, we get by Calder\'on--Zygmund estimate and Poincar\'e inequality that
\begin{equation}
    \label{eq: p1p2(2)}
    \begin{split}
    \int_{B_\rho} |p_1|^{3/2} \d x \lesssim& \int_{B_\rho} \big| \tilde{u} \otimes \tilde{u}-[\tilde{u} \otimes \tilde{u}]_{B_\rho} \big|^{3/2} \d x \lesssim \bigg( \int_{B_\rho} \big| \nabla (\tilde{u} \otimes \tilde{u}) \big| \d x \bigg)^{3/2} \\
    \lesssim& \bigg( \int_{B_\rho} |\nabla \tilde{u}| |\tilde{u}| \d x \bigg)^{3/2} = \bigg( \int_{B_\rho} |\nabla u| \big| u-[u]_{B_\rho} \big| \d x \bigg)^{3/2} \\
    \lesssim& \bigg( \int_{B_\rho} |\nabla u|^2 \d x \bigg)^{3/4} \bigg( \int_{B_\rho} \big| u-[u]_{B_\rho} \big|^2 \d x \bigg)^{3/4},
    \end{split}
\end{equation}
where
\begin{equation*}
    \begin{split}
    \bigg( \int_{B_\rho} \big| u-[u]_{B_\rho} \big|^2 \d x \bigg)^{3/4} =& \bigg( \int_{B_\rho} \big| u-[u]_{B_\rho} \big|^2 \d x \bigg)^{1/2} \bigg( \int_{B_\rho} \big| u-[u]_{B_\rho} \big|^2 \d x \bigg)^{1/4} \\
    \lesssim& \bigg( \int_{B_\rho} |u|^2 \d x \bigg)^{1/2} \cdot \rho^{1/2} \bigg( \int_{B_\rho} |\nabla u|^2 \d x \bigg)^{1/4}.
    \end{split}
\end{equation*}
Hence,
\begin{equation*}
    D(p_1;\rho) \lesssim A^{1/2}(u;\rho) E(u;\rho).
\end{equation*}
On the other hand, we can also directly integrate \eqref{eq: p1p2(2)} in $t$ and apply H\"older inequality to get
\begin{equation*}
    D(p_1;\rho) \lesssim \frac{1}{\rho^2} \bigg( \sup_{-\rho^2<t<0} \int_{B_\rho} |u|^2 \d x \bigg)^{3/4} \int_{-\rho^2}^0 \bigg( \int_{B_\rho} |\nabla u|^2 \d x \bigg)^{3/4} \d t \lesssim A^{3/4}(u;\rho) E^{3/4}(u;\rho).
\end{equation*}
The same estimates also hold for $D(p_2;\rho)$. In this way, we've shown the following lemma.
\begin{lemma}
For any $0<2r \leq \rho \leq 1$, we have
\begin{gather}
    D(p;r) \lesssim \bigg(\frac{r}{\rho}\bigg)^{5/2} D(p;\rho) + \bigg(\frac{\rho}{r}\bigg)^2 \big[ A^{1/2}(u;\rho) E(u;\rho)+C(b;\rho) \big], \label{eq: p(2)} \\
    D(p;r) \lesssim \bigg(\frac{r}{\rho}\bigg)^{5/2} D(p;\rho) + \bigg(\frac{\rho}{r}\bigg)^2 \big[ C(u;\rho) + A^{1/2}(b;\rho) E(b;\rho) \big], \label{eq: p(3)} \\
    D(p;r) \lesssim \bigg(\frac{r}{\rho}\bigg)^{5/2} D(p;\rho) + \bigg(\frac{\rho}{r}\bigg)^2 \big[ A^{3/4}(u;\rho)E^{3/4}(u;\rho)+A^{3/4}(b;\rho)E^{3/4}(b;\rho) \big]. \label{eq: p(4)}
\end{gather}
\end{lemma}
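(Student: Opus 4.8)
The plan is to reuse the splitting $p = p_1 + p_2 + p_3$ already in place and to assemble the three inequalities exactly as \eqref{eq: p(1)} was obtained, only now feeding in the sharper, field-by-field bounds for $p_1,p_2$ in place of the crude $C(u,b;\rho)$ term. Since $p - [p]_{B_r} = \sum_{i=1}^{3}(p_i - [p_i]_{B_r})$ and $|a+b+c|^{3/2} \lesssim |a|^{3/2}+|b|^{3/2}+|c|^{3/2}$, I would start from
\[ D(p;r) \lesssim D(p_1;r) + D(p_2;r) + D(p_3;r), \]
so that it suffices to control each summand at scale $r$ by quantities at scale $\rho$.

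For the harmonic part I would keep \eqref{eq: p3} in the refined form $D(p_3;r) \lesssim (r/\rho)^{5/2}\,[\,D(p;\rho) + D(p_1;\rho) + D(p_2;\rho)\,]$ rather than immediately absorbing $D(p_1;\rho)+D(p_2;\rho)$ into $C(u,b;\rho)$. For the particular parts, localization is clean: since $Q_r \subset Q_\rho$ and subtracting a mean only helps,
\[ D(p_i;r) \lesssim \frac{1}{r^2}\iint_{Q_r}|p_i|^{3/2} \le \Big(\frac{\rho}{r}\Big)^{2}\,\frac{1}{\rho^2}\iint_{Q_\rho}|p_i|^{3/2}, \qquad i=1,2. \]
Because $(r/\rho)^{5/2} \le 1 \le (\rho/r)^2$, the harmonic contributions $D(p_1;\rho),D(p_2;\rho)$ are dominated by these direct ones, so everything collapses to the common skeleton
\[ D(p;r) \lesssim \Big(\frac{r}{\rho}\Big)^{5/2} D(p;\rho) + \Big(\frac{\rho}{r}\Big)^{2}\Big[\frac{1}{\rho^2}\iint_{Q_\rho}|p_1|^{3/2} + \frac{1}{\rho^2}\iint_{Q_\rho}|p_2|^{3/2}\Big]. \]

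The three inequalities then differ only in which scale-$\rho$ bound I insert for each $p_i$, and this is where the one genuinely delicate point sits. The crude estimate \eqref{eq: p1p2(1)} gives $\frac{1}{\rho^2}\iint_{Q_\rho}|p_1|^{3/2}\lesssim C(u;\rho)$ (and $\lesssim C(b;\rho)$ for $p_2$). To trade these for gradient quantities I would use that $u$ is divergence-free: replacing $u\otimes u$ by $\tilde u \otimes \tilde u - [\tilde u \otimes \tilde u]_{B_\rho}$ with $\tilde u = u - [u]_{B_\rho}$ leaves the source $\partial_i\partial_j(u_iu_j)$, hence $p_1$, unchanged, which is precisely what unlocks the Poincar\'e step in \eqref{eq: p1p2(2)} and produces the bounds $\frac{1}{\rho^2}\iint_{Q_\rho}|p_1|^{3/2}\lesssim A^{1/2}(u;\rho)E(u;\rho)$ and $\lesssim A^{3/4}(u;\rho)E^{3/4}(u;\rho)$, with identical statements for $p_2$ in terms of $b$. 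Substituting $A^{1/2}(u;\rho)E(u;\rho)$ for $p_1$ and $C(b;\rho)$ for $p_2$ yields \eqref{eq: p(2)}; the mirror choice yields \eqref{eq: p(3)}; and the $A^{3/4}E^{3/4}$ bound on both fields yields \eqref{eq: p(4)}. Thus the final assembly is pure bookkeeping, and the real content---already carried out above---is the divergence-free cancellation that legitimizes the mean-subtraction; without it the Poincar\'e inequality, and with it the gain of $E$ over $C$, would be out of reach.
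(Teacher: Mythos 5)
Your proposal is correct and follows essentially the same route as the paper: the same decomposition $p=p_1+p_2+p_3$, the same harmonic-part decay \eqref{eq: p3}, the same mean-subtraction/Calder\'on--Zygmund/Poincar\'e estimates \eqref{eq: p1p2(2)} for the particular parts, and the same mix-and-match assembly of crude $C$ bounds with refined $A^{1/2}E$ or $A^{3/4}E^{3/4}$ bounds. The only cosmetic difference is that you spell out the divergence-free cancellation justifying the replacement of $u\otimes u$ by $\tilde u\otimes\tilde u-[\tilde u\otimes\tilde u]_{B_\rho}$ (strictly, this may change $p_1$ by a harmonic function, which your argument harmlessly absorbs into $p_3$), a point the paper leaves implicit.
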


\section{The boundedness of scaled quantities}
\label{sec: boundedness}
Let $(u,b,p)$ be a suitable weak solution of \eqref{eq: MHD} in $Q_1$, and let $\mathcal{E}(r):=A(u,b;r)+E(u,b;r)+C(u,b;r)+D(p;r)$. Theorem \ref{thm: boundedness} follows from the following three propositions.
\begin{proposition}
\label{thm: A(u)<M}
There exist absolute constants $\tilde{c},\alpha>0$ and $0<r_0<1$, such that for arbitrary positive constants $\mathcal{M},\mathcal{N}$, if
\begin{equation*}
    \sup_{0<r<1} A(u;r) \leq \mathcal{M}, \ \text{and} \ \min \bigg\{ \sup_{0<r<1} A(b;r), \, \sup_{0<r<1} E(b;r), \, \sup_{0<r<1} C(b;r) \bigg\} \leq \mathcal{N},
\end{equation*}
then
\begin{equation}
    \label{eq: mathcalE(r)}
    \mathcal{E}(r) \leq \tilde{c} r^\alpha \mathcal{E}(1)+\mathcal{G}(\tilde{c},\mathcal{M},\mathcal{N}), \quad \forall 0<r<r_0,
\end{equation}
where $\mathcal{G}$ is continuous with respect to $\mathcal{M}$ and $\mathcal{N}$, and $\mathcal{G}(\tilde{c},\mathcal{M},\mathcal{N}) \to 0$ as $\mathcal{M},\mathcal{N} \to 0$.
\end{proposition}
\begin{proof}
Let $0<4r \leq \rho \leq 1$. By the local energy inequality \eqref{eq: LEQ1}, we have
\begin{equation}
    \label{eq: LEQ2}
    \mathcal{E}(r) \lesssim H(u,b;2r)+C(u,b;2r)+D(p;2r).
\end{equation}
By \eqref{eq: C<A(HE+H)}, we have
\begin{equation*}
    \begin{split}
    C(u;2r) \leq& \bigg(\frac{\rho}{r}\bigg)^2 C(u;\rho) \leq c_0 \bigg(\frac{\rho}{r}\bigg)^2 A^{1/2}(u;\rho) \big[ H^{1/4}(u;\rho) E^{3/4}(u;\rho) + H(u;\rho) \big] \\
    \leq& c_0 \bigg(\frac{\rho}{r}\bigg)^2 \mathcal{M}^{1/2} \big[ C^{1/6}(u;\rho) E^{3/4}(u;\rho) + C^{2/3}(u;\rho) \big] \\
    \leq& \delta \big[ C(u;\rho)+E(u;\rho) \big] + d(\delta) \bigg[ \bigg(\frac{\rho}{r}\bigg)^{24} \mathcal{M}^6 + \bigg(\frac{\rho}{r}\bigg)^6 \mathcal{M}^{3/2} \bigg].
    \end{split}
\end{equation*}
Here and in what follows, $c_0$ denotes a positive constant independent of the quantities we concern about and may vary from line to line; $\delta$ denotes a small positive number to be determined later and $d(\delta)$ is a positive number depending on $\delta$. Similarly, we have by H\"older inequality and \eqref{eq: C<A+AE} that
\begin{equation*}
    \begin{split}
    H(u;2r) \leq c_0 C^{2/3}(u,b;2r) \leq c_0 \bigg(\frac{r}{\rho}\bigg)^2 \mathcal{M}+\bigg(\frac{\rho}{r}\bigg) \mathcal{M}^{1/2} E^{1/2}(u;\rho).
    \end{split}
\end{equation*}
In what follows, we consider three cases. In the case where $A(b;r) \leq \mathcal{N}$, $C(b;2r)$ and $H(b;2r)$ can be estimated in the same way as $u$, and by \eqref{eq: p(4)} we have
\begin{equation*}
    \begin{split}
    D(p;2r) \leq& c_0 \bigg(\frac{r}{\rho}\bigg)^{5/2} D(p;\rho) + \bigg(\frac{\rho}{r}\bigg)^2 \big[ \mathcal{M}^{3/4} E^{3/4}(u;\rho) + \mathcal{N}^{3/4} E^{3/4}(b;\rho) \big] \\
    \leq& c_0 \bigg(\frac{r}{\rho}\bigg)^{5/2} D(p;\rho) + \delta E(u,b;\rho) + d(\delta) \bigg(\frac{\rho}{r}\bigg)^8 \big( \mathcal{M}^3+\mathcal{N}^3 \big).
    \end{split}
\end{equation*}
In the case where $E(b;r) \leq \mathcal{N}$, we derive by \eqref{eq: C<A(HE+H)} and Poincar\'e inequality that
\begin{equation*}
    \begin{split}
    C \big( b-[b]_{B_\rho};2r \big) \leq& c_0 \bigg(\frac{\rho}{r}\bigg)^{7/2} A^{1/2}(b;\rho) \big[ H^{1/4} \big( b-[b]_{B_\rho};\rho \big) E^{3/4}(b;\rho) + H \big( b-[b]_{B_\rho};\rho \big) \big] \\
    \leq& c_0 \bigg(\frac{\rho}{r}\bigg)^{7/2} A^{1/2}(b;\rho) E(b;\rho) \leq \delta A(b;\rho) + d(\delta) \bigg(\frac{\rho}{r}\bigg)^7 \mathcal{N}^2,
    \end{split}
\end{equation*}
which, combined with H\"older inequality, yields that
\begin{equation}
    \label{eq: C(b)}
    \begin{split}
    C(b;2r) \leq& c_0 C \big( b-[b]_{B_\rho};2r \big) + c_0 C \big( [b]_{B_\rho};2r \big) \\
    \leq& \delta A(b;\rho) + d(\delta) \bigg(\frac{\rho}{r}\bigg)^7 \mathcal{N}^2 + c_0 \bigg(\frac{r}{\rho}\bigg) C(b;\rho).
    \end{split}
\end{equation}
In addition, through \eqref{eq: C<A+AE}, we have
\begin{equation}
    \label{eq: H(b)}
    \begin{split}
    H(b;2r) \leq& c_0 C^{2/3}(b;2r) \leq c_0 \bigg[ \bigg(\frac{r}{\rho}\bigg)^2 A(b;\rho) + \bigg(\frac{\rho}{r}\bigg) A^{1/2}(b;\rho) \mathcal{N}^{1/2} \bigg] \\
    \leq& c_0 \bigg(\frac{r}{\rho}\bigg)^2 A(b;\rho) + \delta A(b;\rho) + d(\delta) \bigg(\frac{\rho}{r}\bigg)^2 \mathcal{N},
    \end{split}
\end{equation}
and by \eqref{eq: p(4)}, we have
\begin{equation*}
    \begin{split}
    D(p;2r) \leq& c_0 \bigg(\frac{r}{\rho}\bigg)^{5/2} D(p;\rho) + c_0 \bigg(\frac{\rho}{r}\bigg)^2 \big[ \mathcal{M}^{3/4} E^{3/4}(u;\rho) + \mathcal{N}^{3/4} A^{3/4}(b;\rho) \big] \\
    \leq& c_0 \bigg(\frac{r}{\rho}\bigg)^{5/2} D(p;\rho) + \delta \big[ E(u;\rho)+A(b;\rho) \big] + d(\delta) \bigg(\frac{\rho}{r}\bigg)^8 \big( \mathcal{M}^3 + \mathcal{N}^3 \big).
    \end{split}
\end{equation*}
In the case where $C(b;r) \leq \mathcal{N}$, we have immediately that $H(b;2r) \leq c_0 \mathcal{N}^{2/3}$, and $D(p;2r)$ can be estimated using \eqref{eq: p(1)}.

Therefore, in any case, we obtain
\begin{equation}
    \label{eq: mathcalE(r)_recursive}
    \mathcal{E}(r) \leq \bigg[\delta+c_0 \bigg(\frac{r}{\rho}\bigg)\bigg] \mathcal{E}(\rho) + d(\delta) \bigg(\frac{\rho}{r}\bigg)^\gamma \mathcal{G}_0 (\mathcal{M},\mathcal{N})
\end{equation}
for some positive $\gamma$ and any $0<4r \leq \rho \leq 1$, where $\mathcal{G}_0$ is continuous with respect to $\mathcal{M}$ and $\mathcal{N}$, and $\mathcal{G}_0 \to 0$ as $\mathcal{M},\mathcal{N} \to 0$. If we denote $r/\rho=\theta$, and fix $\theta$ and $\delta$ so that $c_0 \theta^{1/2} \leq 1/2$, $\theta \leq 1/4$, $\delta \leq \theta^{1/2}/2$, then
\begin{equation}
    \label{eq: mathcalE(r)_recursive2}
    \mathcal{E}(r) \leq \theta^{1/2} \mathcal{E}(\theta^{-1} r)+d(\delta) \theta^{-\gamma} \mathcal{G}_0 (\mathcal{M},\mathcal{N}), \quad \forall 0<r \leq \theta.
\end{equation}
Iterating \eqref{eq: mathcalE(r)_recursive2} for $k$ times, where the positive integer $k$ satisfies $\theta^{-k} r \leq 1<\theta^{-(k+1)} r$, we obtain
\begin{equation*}
    \begin{split}
    \mathcal{E}(r) \leq& \theta^{1/2} \mathcal{E}(\theta^{-1} r) + d(\delta) \theta^{-\gamma} \mathcal{G}_0(\mathcal{M},\mathcal{N}) \\
    \leq& \theta \mathcal{E}(\theta^{-2} r) + d(\delta) \theta^{-\gamma} \big(1+\theta^{1/2}\big) \mathcal{G}_0(\mathcal{M},\mathcal{N}) \\
    \leq& \cdots \leq \theta^{k/2} \mathcal{E}(\theta^{-k} r) + \frac{d(\delta) \theta^{-\gamma}}{1-\theta^{1/2}} \mathcal{G}_0(\mathcal{M},\mathcal{N}) \\
    \leq& \theta^{k/2} \cdot \frac{1}{(\theta^{-k} r)^2} \mathcal{E}(1) + \frac{d(\delta) \theta^{-\gamma}}{1-\theta^{1/2}} \mathcal{G}_0(\mathcal{M},\mathcal{N}), \quad \forall \theta^{k+1} < r \leq \theta^k.
    \end{split}
\end{equation*}
Noting that $r>\theta^{k+1}$ implies that $\theta^{k/2}/(\theta^{-k} r)^2<\theta^{-5/2} r^{1/2}$, we obtain
\begin{equation*}
    \mathcal{E}(r) \leq \theta^{-5/2} r^{1/2} \mathcal{E}(1) + \mathcal{G}(\theta,\delta,\mathcal{M},\mathcal{N}), \quad \forall 0<r \leq \theta,
\end{equation*}
where $\mathcal{G} (\theta,\delta,\mathcal{M},\mathcal{N}) = d(\delta)\theta^{-\gamma} \mathcal{G}_0 (\mathcal{M},\mathcal{N})/(1-\theta^{1/2})$.
\end{proof}
\begin{proposition}
\label{thm: E(u)<M}
There exist absolute constants $\tilde{c},\alpha>0$ and $0<r_0<1$, such that for arbitrary positive constants $\mathcal{M},\mathcal{N}$, if
\begin{equation*}
    \sup_{0<r<1} E(u;r) \leq \mathcal{M}, \ \text{and} \ \min \bigg\{ \sup_{0<r<1} A(b;r), \, \sup_{0<r<1} E(b;r), \, \sup_{0<r<1} C(b;r) \bigg\} \leq \mathcal{N},
\end{equation*}
then \eqref{eq: mathcalE(r)} holds for any $0<r<r_0$.
\end{proposition}
\begin{proof}
Let $0<4r \leq \rho \leq 1$. $C(u;2r)$ and $H(u;2r)$ can be estimated in the same way as \eqref{eq: C(b)} and \eqref{eq: H(b)}, respectively. The rest part of the proof is divided into three cases. The case $E(u;r) \leq \mathcal{M}$, $A(b;r) \leq \mathcal{N}$ is basically the same as $A(u;r) \leq \mathcal{M}$, $E(b;r) \leq \mathcal{N}$ in Proposition \ref{thm: A(u)<M}. In the case where $E(b;r) \leq \mathcal{N}$, the estimates of $C(b;2r)$ and $H(b;2r)$ are given by \eqref{eq: C(b)} and \eqref{eq: H(b)}, respectively; whereas by \eqref{eq: p(4)}, we have
\begin{equation*}
    \begin{split}
    D(p;2r) \leq& c_0 \bigg(\frac{r}{\rho}\bigg)^{5/2} D(p;\rho) + c_0 \bigg(\frac{\rho}{r}\bigg)^2 \big[ A^{3/4}(u;\rho) \mathcal{M}^{3/4} + A^{3/4}(b;\rho) \mathcal{N}^{3/4} \big] \\
    \leq& c_0 \bigg(\frac{r}{\rho}\bigg)^{5/2} D(p;\rho) + \delta A(u,b;\rho) + d(\delta) \bigg(\frac{\rho}{r}\bigg)^8 \big( \mathcal{M}^3 + \mathcal{N}^3 \big).
    \end{split}
\end{equation*}
In this way we obtain the estimates of the right-hand-side of \eqref{eq: LEQ2}. In the case where $C(b;r) \leq \mathcal{N}$, we have $H(b;2r) \leq c_0 \mathcal{N}^{2/3}$, and
\begin{equation*}
    \frac{1}{r^2} \iint_{Q_{2r}} |b|^2 |u| \leq \bigg(\frac{\rho}{r}\bigg)^{2/3} \mathcal{N}^{2/3} C^{1/3}(u;\rho) \leq \delta C(u;\rho)+d(\delta) \bigg(\frac{\rho}{r}\bigg) \mathcal{N}.
\end{equation*}
Also, recalling \eqref{eq: p(2)}, we have
\begin{equation*}
    \begin{split}
    &\frac{1}{r^2} \iint_{Q_{2r}} \big| p-[p]_{B_{2r}} \big| |u| \leq D^{2/3}(p;2r) C^{1/3}(u;2r) \\
    \leq& c_0 \bigg(\frac{r}{\rho}\bigg) D^{2/3}(p;\rho) C^{1/3}(u;\rho) + c_0 \bigg(\frac{\rho}{r}\bigg)^2 C^{1/3}(u;\rho) \big[ A^{1/3}(u;\rho) \mathcal{M}^{2/3} + \mathcal{N}^{2/3} \big] \\
    \leq& c_0 \bigg(\frac{r}{\rho}\bigg) \mathcal{E}(\rho) + \delta [A(u;\rho)+C(u;\rho)] + d(\delta) \bigg[ \bigg(\frac{\rho}{r}\bigg)^6 \mathcal{M}^2 + \bigg(\frac{\rho}{r}\bigg)^3 \mathcal{N} \bigg],
    \end{split}
\end{equation*}
and
\begin{equation*}
    \begin{split}
    D(p;2r) \leq& c_0 \bigg(\frac{r}{\rho}\bigg)^{5/2} D(p;\rho) + c_0 \bigg(\frac{\rho}{r}\bigg)^2 \big[ A^{1/2}(u;\rho) \mathcal{M}+\mathcal{N} \big] \\
    \leq& c_0 \bigg(\frac{r}{\rho}\bigg)^{5/2} D(p;\rho) + \delta A(u;\rho) + d(\delta) \bigg(\frac{\rho}{r}\bigg)^4 \mathcal{M}^2 + c_0 \bigg(\frac{\rho}{r}\bigg)^2 \mathcal{N}.
    \end{split}
\end{equation*}
It then suffices to apply \eqref{eq: LEQ1}. In any case, we obtain an inequality in the form of \eqref{eq: mathcalE(r)_recursive} in Proposition \ref{thm: A(u)<M}. The remaining proof is similar.
\end{proof}
\begin{proposition}
\label{thm: C(u)<M}
There exist absolute constants $\tilde{c},\alpha>0$ and $0<r_0<1$, such that for arbitrary positive constants $\mathcal{M},\mathcal{N}$, if
\begin{equation*}
    \sup_{0<r<1} C(u;r) \leq \mathcal{M}, \ \text{and} \ \min \bigg\{ \sup_{0<r<1} A(b;r), \, \sup_{0<r<1} E(b;r), \, \sup_{0<r<1} C(b;r) \bigg\} \leq \mathcal{N},
\end{equation*}
then \eqref{eq: mathcalE(r)} holds for any $0<r<r_0$.
\end{proposition}
\begin{proof}
The case $C(u;r) \leq \mathcal{M}$, $A(b;r) \leq \mathcal{N}$ is basically the same as $A(u;r) \leq \mathcal{M}$, $C(b;r) \leq \mathcal{N}$ in Proposition \ref{thm: A(u)<M}. The case $C(u;r) \leq \mathcal{M}$, $E(b;r) \leq \mathcal{N}$ is almost the same as $E(u;r) \leq \mathcal{M}$, $C(b;r) \leq \mathcal{N}$ in Proposition \ref{thm: E(u)<M}, except that
\begin{equation*}
    \frac{1}{r^2} \iint_{Q_{2r}} \big| p-[p]_{B_{2r}} \big| |u| \leq c_0 \bigg(\frac{\rho}{r}\bigg)^{4/3} \mathcal{M}^{1/3} D^{2/3}(p;\rho) \leq \delta D(p;\rho) + d(\delta) \bigg(\frac{\rho}{r}\bigg)^4 \mathcal{M}.
\end{equation*}
Finally, in the case where $C(b;r) \leq \mathcal{N}$, we deduce from \eqref{eq: LEQ1} and \eqref{eq: p(1)} that
\begin{equation*}
    \begin{split}
    \mathcal{E}(r) \leq& c_0 \bigg[ H(u,b;2r)+\bigg(\frac{\rho}{r}\bigg)^2 C(u,b;\rho) + \bigg(\frac{r}{\rho}\bigg)^{5/2} D(p;\rho) \bigg] \\
    \leq& c_0 \bigg[ \mathcal{M}^{2/3}+\mathcal{N}^{2/3}+\bigg(\frac{\rho}{r}\bigg)^2 (\mathcal{M}+\mathcal{N}) + \bigg(\frac{r}{\rho}\bigg)^{5/2} D(p;\rho) \bigg].
    \end{split}
\end{equation*}
In any case, we arrive at an inequality similar to \eqref{eq: mathcalE(r)_recursive}, and the desired result follows likewise.
\end{proof}

\section{The smallness of scaled quantities}
\label{sec: smallness}
In this section we apply the boundedness estimates obtained in Section \ref{sec: boundedness} to establish the $\varepsilon$-regularity criteria stated in Theorem \ref{thm: smallness}.
\begin{proposition}
\label{thm: C(u)_small}
For arbitrary $\varepsilon_0>0$ and $\mathcal{N}>0$, there exists $\varepsilon=\varepsilon(\varepsilon_0,\mathcal{N})>0$, such that if
\begin{equation}
    \label{eq: C(u)_small}
    \varlimsup_{r \to 0} C(u;r) \leq \varepsilon, \ \text{and} \ \min \bigg\{ \sup_{0<r<1} A(b;r), \, \sup_{0<r<1} E(b;r), \, \sup_{0<r<1} C(b;r) \bigg\} \leq \mathcal{N},
\end{equation}
then
\begin{equation*}
    \varlimsup_{r \to 0} H(b;r) \leq \varepsilon_0.
\end{equation*}
\end{proposition}
\begin{proof}
For any $0<\rho \leq 1$, let $\chi=\chi(x,t)$ and $\varphi(x,t)$ be arbitrary smooth scalar and vector functions, respectively, and suppose $\varphi$ is compactly supported in $Q_\rho$, and $\chi$ vanishes near the parabolic boundary of $Q_\rho$. Testing \eqref{eq: MHD}$_2$ with $\chi \varphi$, we obtain
\begin{equation*}
    \iint_{Q_\rho} b \cdot \big[ \partial_t (\chi \varphi) + \Delta (\chi \varphi) \big] = -\iint_{Q_\rho} (u \otimes b-b \otimes u) : \nabla (\chi \varphi),
\end{equation*}
that is,
\begin{equation*}
    \begin{split}
    \iint_{Q_\rho} b \chi \cdot (\partial_t \varphi+\Delta \varphi) =& -\iint_{Q_\rho} b \cdot \varphi (\partial_t \chi-\Delta \chi) + 2 \big[ b \cdot \varphi \Delta \chi + (\nabla \chi \otimes b) : \nabla \varphi \big] \\
    &-\iint_{Q_\rho} \big( u \otimes b-b \otimes u \big) : \nabla (\chi \varphi) \\
    =& -\iint_{Q_\rho} b \cdot \varphi (\partial_t \chi-\Delta \chi) - 2 (\nabla \chi \otimes \varphi) : \nabla b \\
    &+\iint_{Q_\rho} \chi \big[ (u \otimes \varphi) : \nabla b - (b \otimes \varphi) : \nabla u \big],
    \end{split}
\end{equation*}
which means
\begin{equation}
    \label{eq: d(bchi)/dt-Delta(bchi)}
    \partial_t (b \chi) - \Delta (b \chi) = b (\partial_t \chi-\Delta \chi) - 2 (\nabla \chi \cdot \nabla) b - \chi (u \cdot \nabla) b + \chi (b \cdot \nabla) u \quad \text{in} \ Q_\rho
\end{equation}
in the sense of distribution. Suppose $\chi \equiv 1$ in $Q_{\rho/2}$, and $|\nabla \chi| \lesssim |\rho|^{-1}$, $|\partial_t \chi|+|\nabla^2 \chi| \lesssim |\rho|^{-2}$. For any $(x,t) \in Q_{\rho/2}$, we have by \eqref{eq: d(bchi)/dt-Delta(bchi)} that
\begin{equation}
    \label{eq: bchi}
    \begin{split}
    b \chi (x,t) =& \int_{-\rho^2}^t \int_{B_\rho} \Gamma(x-y,t-s) \big[ b (\partial_t \chi-\Delta \chi) - 2 (\nabla \chi \cdot \nabla) b \big](y,s) \, \d y \d s \\
    &- \int_{-\rho^2}^t \int_{B_\rho} \Gamma(x-y,t-s) \chi (u \cdot \nabla) b(y,s) \, \d y \d s \\
    &+ \int_{-\rho^2}^t \int_{B_\rho} \Gamma(x-y,t-s) \chi (b \cdot \nabla) u(y,s) \, \d y \d s,
    \end{split}
\end{equation}
where $\Gamma$ is the heat kernel and satisfies the well-known pointwise estimate (see, e.g., \cite{solonnikov1964Estimates})
\begin{equation}
    \label{eq: heat_kernel}
    \big| \nabla^k \partial_t^l \Gamma(x,t) \big| \lesssim \big( |x|^2+t \big)^{-(3+k+2l)/2}.
\end{equation}
By integration by parts, we obtain
\begin{equation*}
    \begin{split}
    |b \chi(x,t)| \leq& \frac{c_0}{\rho^2} \int_{-\rho^2}^t \int_{B_\rho \backslash B_{\rho/2}} \Gamma(x-y,t-s) |b| \, \d y \d s \\
    &+ \frac{c_0}{\rho} \int_{-\rho^2}^t \int_{B_\rho \backslash B_{\rho/2}} |\nabla \Gamma(x-y,t-s)| |b| \, \d y \d s \\
    &+ c_0 \int_{-\rho^2}^t \int_{B_\rho} |\nabla \Gamma(x-y,t-s)| |u| |b| \, \d y \d s \\
    &+ \frac{c_0}{\rho} \int_{-\rho^2}^t \int_{B_\rho \backslash B_{\rho/2}} \Gamma(x-y,t-s) |u| |b| \, \d y \d s \\
    =:& (I_1+I_2+I_3+I_4)(x,t).
    \end{split}
\end{equation*}
Let $0<4r \leq \rho \leq 1$ and $(x,t) \in Q_r$. Recalling \eqref{eq: heat_kernel}, we have
\begin{gather*}
    (I_1+I_2)(x,t) \leq \frac{c_0}{\rho^5} \int_{-\rho^2}^t \int_{B_\rho} |b(y,s)| \, \d y \d s \leq \frac{c_0}{\rho} H^{1/2}(b;\rho), \\
    I_4(x,t) \leq \frac{c_0}{\rho^4} \int_{-\rho^2}^t \int_{B_\rho} |u(y,s)| |b(y,s)| \, \d y \d s \leq \frac{c_0}{\rho} C^{1/3}(u;\rho) H^{1/2}(b;\rho).
\end{gather*}
Also, we can deduce by setting $Y:=y/(t-s)^{1/2}$ that
\begin{equation*}
    \int_{\mathbb{R}^3} \frac{1}{(|y|^2+t-s)^{12/5}} \, \d y \leq \frac{1}{(t-s)^{9/10}} \int_{\mathbb{R}^3} \frac{1}{(|Y|^2+1)^3} \, \d Y \leq \frac{c_0}{(t-s)^{9/10}}.
\end{equation*}
Therefore, by applying Minkowski's integral inequality (see, e.g., \cite[Appendices, A.1]{stein1970Singular}), Young's convolution inequality and \eqref{eq: heat_kernel}, we derive
\begin{equation*}
    \begin{split}
    \|I_3(\cdot,t)\|_{L^2(B_r)} =& c_0 \bigg( \int_{B_r} \bigg( \int_{-\rho^2}^t \int_{B_\rho} |\nabla \Gamma(x-y,t-s)| |u| |b| \, \d y \, \d s \bigg)^2 \, \d x \bigg)^{1/2} \\
    \leq& c_0 \int_{-\rho^2}^t \bigg( \int_{B_r} \bigg( \int_{B_\rho} |\nabla \Gamma(x-y,t-s)| |u| |b| \, \d y \bigg)^2 \, \d x \bigg)^{1/2} \, \d s \\
    \leq&  c_0 \int_{-\rho^2}^t \|\nabla \Gamma(\cdot,t-s)\|_{L^{6/5}(\mathbb{R}^3)} \|u(\cdot,s)\|_{L^3(B_\rho)} \|b(\cdot,s)\|_{L^3(B_\rho)} \, \d s \\
    \leq& c_0 \int_{-\rho^2}^t \bigg( \int_{\mathbb{R}^3} \frac{1}{(|y|^2+t-s)^{12/5}} \, \d y \bigg)^{5/6} \|u(\cdot,s)\|_{L^3(B_\rho)} \|b(\cdot,s)\|_{L^3(B_\rho)} \, \d s \\
    \leq& c_0 \int_{-\rho^2}^t \frac{1}{(t-s)^{3/4}} \|u(\cdot,s)\|_{L^3(B_\rho)} \|b(\cdot,s)\|_{L^3(B_\rho)} \, \d s,
    \end{split}
\end{equation*}
and thus, by Young's convolution inequality, we obtain
\begin{equation*}
    \|I_3\|_{L^2(Q_r)} \leq c_0 \rho^{1/6} \|u\|_{L^3(Q_\rho)} \|b\|_{L^3(Q_\rho)} = c_0 \rho^{3/2} C^{1/3}(u;\rho) C^{1/3}(b;\rho).
\end{equation*}
As a consequence,
\begin{equation*}
    \iint_{Q_r} |b|^2 \, \d x \d t \leq c_0 \frac{r^5}{\rho^2} \big[ H(b;\rho) + C^{2/3}(u;\rho) H(b;\rho) \big] + c_0 \rho^3 C^{2/3}(u;\rho) C^{2/3}(b;\rho),
\end{equation*}
which yields that
\begin{equation}
    \label{eq: H(b)_recursive1}
    H(b;r) \leq c_0 \bigg(\frac{r}{\rho}\bigg)^2 H(b;\rho) + c_0 \bigg(\frac{\rho}{r}\bigg)^3 C^{2/3}(u;\rho) \big[ C^{2/3}(b;\rho)+H(b;\rho) \big].
\end{equation}
Recalling \eqref{eq: C(u)_small}$_1$, there exists $r_0 \leq 1$, such that $\sup_{0<r<r_0} C(u;r) \leq \varepsilon$. We may as well assume $r_0=1$ without loss of generality. Then by \eqref{eq: H(b)_recursive1} and the results in Section \ref{sec: boundedness} (again, we may as well assume the result \eqref{eq: mathcalE(r)} in Section \ref{sec: boundedness} holds for all $0<r<1$), we obtain
\begin{equation*}
    H(b;r) \leq c_0 \bigg(\frac{r}{\rho}\bigg)^2 H(b;\rho) + \bigg(\frac{\rho}{r}\bigg)^3 \varepsilon^{2/3} \mathcal{G}_1, \quad \forall 0<4r \leq \rho \leq 1,
\end{equation*}
where $\mathcal{G}_1=\mathcal{G}_1(\mathcal{E}(1),\varepsilon,\mathcal{N})$ is continuous with respect to $\varepsilon$ and $\mathcal{N}$. Denote $r/\rho=\theta$ and fix $\theta$ so that $c_0 \theta^{3/2} \leq 1$, $\theta \leq 1/4$, then an iteration process similar to Proposition \ref{thm: A(u)<M} leads to
\begin{equation*}
    H(b;r) \leq \theta^{-5/2} r^{1/2} H(b;1) + \frac{\theta^{-3} \varepsilon^{2/3} \mathcal{G}_1}{1-\theta^{1/2}}
\end{equation*}
for small $r$, which gives the desired result.
\end{proof}
\begin{proposition}
\label{thm: A(u)_small}
For arbitrary $\varepsilon_0>0$ and $\mathcal{N}>0$, there exists $\varepsilon=\varepsilon(\varepsilon_0,\mathcal{N})>0$, such that if
\begin{equation}
    \label{eq: A(u)_small}
    \varlimsup_{r \to 0} A(u;r) \leq \varepsilon, \ \text{and} \ \min \bigg\{ \sup_{0<r<1} A(b;r), \, \sup_{0<r<1} E(b;r), \, \sup_{0<r<1} C(b;r) \bigg\} \leq \mathcal{N},
\end{equation}
then
\begin{equation*}
    \varlimsup_{r \to 0} H(b;r) \leq \varepsilon_0.
\end{equation*}
\end{proposition}
\begin{proof}
Let $0<4r \leq \rho \leq 1$ and $(x,t) \in Q_r$. By \eqref{eq: heat_kernel}, we have
\begin{gather*}
    (I_1+I_2)(x,t) \leq \frac{c_0}{\rho^5} \int_{-\rho^2}^t \int_{B_\rho} |b(y,s)| \, \d y \d s \leq \frac{c_0}{\rho} H^{1/2}(b;\rho), \\
    I_4(x,t) \leq \frac{c_0}{\rho^4} \int_{-\rho^2}^t \int_{B_\rho} |u(y,s)| |b(y,s)| \, \d y \d s \leq \frac{c_0}{\rho} A^{1/2}(u;\rho) H^{1/2}(b;\rho).
\end{gather*}
According to Minkowski's integral inequality, Young's convolution inequality, Gagliardo--Nirenberg inequality and \eqref{eq: heat_kernel}, we have
\begin{equation*}
    \begin{split}
    \|I_3(\cdot,t)\|_{L^2(B_r)} \leq& c_0 \int_{-\rho^2}^t \bigg( \int_{B_r} \bigg( \int_{B_\rho} |\nabla \Gamma(x-y,t-s)| |u| |b| \, \d y \bigg)^2 \, \d x \bigg)^{1/2} \, \d s \\
    \leq&  c_0 \int_{-\rho^2}^t \|\nabla \Gamma(\cdot,t-s)\|_{L^{6/5}(\mathbb{R}^3)} \|u(\cdot,s)\|_{L^2(B_\rho)} \|b(\cdot,s)\|_{L^6(B_\rho)} \, \d s \\
    \leq& c_0 \int_{-\rho^2}^t \bigg( \int_{\mathbb{R}^3} \frac{1}{(|y|^2+t-s)^{12/5} \, \d y} \bigg)^{5/6} \|u(\cdot,s)\|_{L^2(B_\rho)} \|b(\cdot,s)\|_{L^6(B_\rho)} \, \d s \\
    \leq& c_0 \rho^{1/2} A^{1/2}(u;\rho) \int_{-\rho^2}^t \frac{1}{(t-s)^{3/4}} \big( \|\nabla b(\cdot,s)\|_{L^2(B_\rho)} + \rho^{-1} \|b(\cdot,s)\|_{L^2(B_\rho)} \big) \, \d s.
    \end{split}
\end{equation*}
Hence, by Young's convolution inequality, we obtain
\begin{equation*}
    \begin{split}
    \|I_3\|_{L^2(Q_r)} \leq& c_0 \rho^{1/2} A^{1/2}(u;\rho) \rho^{1/10} \big( \|\nabla b\|_{L^2(Q_\rho)} + \rho^{-1} \|b\|_{L^2(Q_\rho)} \big) \rho^{2/5} \\
    =& c_0 \rho^{3/2} A^{1/2}(u;\rho) \big[ E^{1/2}(b;\rho)+H^{1/2}(b;\rho) \big].
    \end{split}
\end{equation*}
As a result,
\begin{equation*}
    \begin{split}
    \iint_{Q_r} |b|^2 \, \d x \d t \leq& c_0 \frac{r^5}{\rho^2} \big[ H(b;\rho) + A(u;\rho) H(b;\rho) \big] + c_0 \rho^3 A(u;\rho) \big[ E(b;\rho)+H(b;\rho) \big],
    \end{split}
\end{equation*}
and thus
\begin{equation*}
    H(b;r) \leq c_0 \bigg(\frac{r}{\rho}\bigg)^2 H(b;\rho) + c_0 \bigg(\frac{\rho}{r}\bigg)^3 A(u;\rho) \big[ E(b;\rho)+H(b;\rho) \big].
\end{equation*}
Recalling the smallness assumption on $A(u;\rho)$ and the boundedness of $E(b;\rho)+H(b;\rho)$ which follows from Section \ref{sec: boundedness}, we arrive at the desired result by implementing the iteration process.
\end{proof}
\begin{proposition}
\label{thm: E(u)_small}
For arbitrary $\varepsilon_0>0$ and $\mathcal{N}>0$, there exists $\varepsilon=\varepsilon(\varepsilon_0,\mathcal{N})>0$, such that if
\begin{equation}
    \label{eq: E(u)_small}
    \varlimsup_{r \to 0} E(u;r) \leq \varepsilon, \ \text{and} \ \min \bigg\{ \sup_{0<r<1} A(b;r), \, \sup_{0<r<1} E(b;r), \, \sup_{0<r<1} C(b;r) \bigg\} \leq \mathcal{N},
\end{equation}
then
\begin{equation*}
    \varlimsup_{r \to 0} H(b;r) \leq \varepsilon_0.
\end{equation*}
\end{proposition}
\begin{proof}
Let $0<4r \leq \rho \leq 1$. Through an argument almost the same as Proposition \ref{thm: A(u)_small}, we can derive
\begin{equation}
    \label{eq: H(b)_recursive2}
    H(b;r) \leq c_0 \bigg(\frac{r}{\rho}\bigg)^2 H(b;\rho) + c_0 \bigg(\frac{\rho}{r}\bigg)^3 A(b;\rho) \big[ E(u;\rho)+H(u;\rho) \big].
\end{equation}
Denote $\theta=r/\rho$. By \eqref{eq: C<A+AE}, we have
\begin{equation*}
    H(u;\rho) \leq c_0 C^{2/3}(u;\rho) \leq c_0 \theta^4 A(u;\theta^{-2} \rho) + c_0 \theta^{-2} A^{1/2}(u;\theta^{-2} \rho) E^{1/2}(u;\theta^{-2} \rho)
\end{equation*}
for $\rho \leq \theta^2$, which, combined with \eqref{eq: H(b)_recursive2}, yields
\begin{equation*}
    \begin{split}
    H(b;r) \leq& c_0 \theta^2 H(b;\theta^{-1} r) + c_0 \theta^{-3} A(b;\theta^{-1} r) E(u;\theta^{-1} r) \\
    &+ c_0 \theta A(b;\theta^{-1} r) A(u;\theta^{-3} r) + c_0 \theta^{-5} A(b;\theta^{-1} r) A^{1/2}(u;\theta^{-3} r) E^{1/2}(u;\theta^{-3} r)
    \end{split}
\end{equation*}
for $r \leq \theta^3$. Again, recalling \eqref{eq: E(u)_small}$_1$, we may as well assume $\sup_{0<r<1} E(u;r) \leq \varepsilon$, then the above estimate and the results in Section \ref{sec: boundedness} lead to
\begin{equation}
    \label{eq: H(b)_recursive3}
    H(b;r) \leq c_0 \theta^2 H(b;\theta^{-1} r) + \big( \theta^{-3} \varepsilon + \theta + \theta^{-5} \varepsilon^{1/2} \big) \mathcal{G}_1,
\end{equation}
where $\mathcal{G}_1=\mathcal{G}_1(\mathcal{E}(1),\varepsilon,\mathcal{N})$ is continuous with respect to $\varepsilon$ and $\mathcal{N}$. Suppose $\theta$ is small enough so that $c_0 \theta \leq 1$, $\theta \leq 1/4$. Iterating \eqref{eq: H(b)_recursive3} for $k$ times, where the positive integer $k$ satisfies $\theta^{-k} r \leq \theta^2<\theta^{-k-1} r$, we obtain
\begin{equation*}
    \begin{split}
    H(b;r) \leq& \theta^k H(b;\theta^{-k} r) + \big( \theta^{-3} \varepsilon + \theta + \theta^{-5} \varepsilon^{1/2} \big) \cdot \frac{\mathcal{G}_1}{1-\theta} \\
    \leq& \theta^k \cdot \bigg(\frac{\theta^2}{\theta^{-k} r}\bigg)^3 H(b;\theta^2) + \big( \theta^{-3} \varepsilon + \theta + \theta^{-5} \varepsilon^{1/2} \big) \cdot \frac{\mathcal{G}_1}{1-\theta} \\
    \leq& \theta^{-6} r H(b;\theta^2) + \big( \theta^{-3} \varepsilon + \theta + \theta^{-5} \varepsilon^{1/2} \big) \cdot \frac{\mathcal{G}_1}{1-\theta}
    \end{split}
\end{equation*}
for small $r$. We then complete the proof by taking first $\theta$ and then $\varepsilon,r$ small enough.
\end{proof}

According to \eqref{eq: H(b)} and the results in Section \ref{sec: boundedness}, it is easy to see that under the assumptions of Propositions \ref{thm: C(u)_small}, \ref{thm: A(u)_small} and \ref{thm: E(u)_small}, we can deduce
\begin{equation*}
    \varlimsup_{r \to 0} H(u;r) \leq \varepsilon^{2/3}, \quad \varlimsup_{r \to 0} H(u;r) \leq \varepsilon, \quad \text{and} \ \varlimsup_{r \to 0} H(u;r) \leq \theta \mathcal{G}_2 + d(\theta^{-1}) \theta^{-2} \varepsilon,
\end{equation*}
respectively, where $\mathcal{G}_2(\mathcal{E}(1),\varepsilon,\mathcal{N})$ is continuous with respect to $\varepsilon$ and $\mathcal{N}$, and $\theta \mathcal{G}_2 + d(\theta^{-1}) \theta^{-2} \varepsilon<\varepsilon_0$ if we take first $\theta$ and then $\varepsilon$ small. This, combined with the results of Propositions \ref{thm: C(u)_small}--\ref{thm: E(u)_small} and Theorem II (i) in the Introduction part, completes the proof of Theorem \ref{thm: smallness}. \hfill$\Box$

\section{Scaled mixed norms}
\label{sec: mixed}
We now apply interpolation and embedding to the scale-invariant bounds derived earlier to obtain $\varepsilon$-regularity and Type I characterisation in terms of the mixed Lebesgue norms.

\subsection{Mixed Lebesgue norms of \texorpdfstring{$u$ and $b$}{u and b}} \hfill \\
\label{subsec: (i)}

Let $f=u$ or $b$, and $1 \leq s,q \leq \infty$. Assume $0 \leq 2/s+3/q-1 \leq 1$. It follows from interpolation and Gagliardo--Nirenberg inequalities that
\begin{equation*}
    \|f\|_{L^3(B_r)} \leq \|f\|_{L^q(B_r)}^\alpha \|f\|_{L^2(B_r)}^\beta \big( \|\nabla f\|_{L^2(B_r)}+r^{-1} \|f\|_{L^2(B_r)} \big)^\gamma,
\end{equation*}
where $0 \leq \alpha,\beta,\gamma \leq 1$ satisfy
\begin{equation}
    \label{eq: parameter1}
    \frac{\alpha}{q}+\frac{\beta}{2}+\frac{\gamma}{6}=\frac{1}{3}, \quad \alpha+\beta+\gamma=1.
\end{equation}
Therefore, by integrating in $t$ and applying H\"older inequality, we derive
\begin{equation}
    \label{eq: C(f)}
    \begin{split}
    C(f;r) \leq& c_0 r^{-2(3\alpha/s+3\gamma/2)} \bigg( \sup_{-r^2<t<0} \int_{B_r} |f|^2 \bigg)^{3\beta/2} \|f\|_{L_t^s L_x^q}^{3\alpha} \bigg( \iint_{Q_r} |\nabla f|^2+r^{-2} |f|^2 \bigg)^{3\gamma/2} \\
    \leq& c_0 A^{3\beta/2}(f;r) G^{3\alpha}(f;r) \big[ E(f;r)+H(f;r) \big]^{3\gamma/2} \\
    \leq& c_0 [A(f;r)+E(f;r)]^\mu G^{3\alpha}(f;r), \quad \mu:=3(1-\alpha)/2,
    \end{split}
\end{equation}
provided that
\begin{equation}
    \label{eq: parameter2}
    \frac{3\alpha}{s}+\frac{3\gamma}{2} \leq 1, \ \ \text{or equivalently,} \ \ 3\alpha \bigg(\frac{2}{s}+\frac{3}{q}-\frac{3}{2}\bigg) \leq \frac{1}{2}, \ \ \text{according to \eqref{eq: parameter1}.}
\end{equation}
In order that \eqref{eq: parameter2} is satisfied, take $\alpha=1/3$, i.e., $\mu=1$, when $2/s+3/q-1=1$; whereas when $2/s+3/q-1<1$, we may set $\alpha>1/3$, i.e., $\mu<1$.

For any $0<4r<\rho<1$, it follows from \eqref{eq: C(f)} and \eqref{eq: LEQ1} that
\begin{equation*}
    \begin{split}
    C(f;r) \leq& c_0 G^{3-2\mu}(f;r) \bigg(\frac{\rho}{r}\bigg)^2 \big[ C^{2/3}(u,b;\rho) + C(u;\rho) \\
    &+ C^{1/3}(u;\rho) C^{2/3}(b;\rho) + C^{1/3}(u;\rho) D^{2/3}(p;\rho) \big]^\mu.
    \end{split}
\end{equation*}
By Young's inequality, when $\mu=1$, we have
\begin{equation*}
    C(f;r) \leq c_0 G(f;r) \bigg(\frac{\rho}{r}\bigg)^2 \big[ \delta_1 C(u,b;\rho) + \delta_1^{-2} + \delta_2^{-2} C(u;\rho) + \delta_2 C(b;\rho) + \delta_2 D(p;\rho) \big];
\end{equation*}
and when $\mu<1$, we have
\begin{equation*}
    \begin{split}
    C(f;r) \leq& c_0 G^{3-2\mu}(f;r) \bigg(\frac{\rho}{r}\bigg)^2 \big[ C^{2/3}(u,b;\rho) + C(u,b;\rho) + D(p;\rho) \big]^\mu \\
    \leq& c_0 \delta_3 [C(u,b;\rho)+D(p;\rho)] + \delta_3^{-\frac{\mu}{1-\mu}} \bigg(\frac{\rho}{r}\bigg)^{\frac{2}{1-\mu}} \big[ G^{\frac{3-2\mu}{1-\mu}}(f;r) + G^3(f;r) \big],
    \end{split}
\end{equation*}
where $\delta_1,\delta_2,\delta_3$ are small parameters to be determined. In the case where $\mu=1$ and only the boundedness condition rather than smallness condition is imposed on $G(f;r)$, we need to multiply $C(f;r)$ by a small parameter so that it can be controlled.

For example, suppose $G(u,s_1,q_1;r) \leq \varepsilon$ and $G(b,s_2,q_2;r) \leq \mathcal{N}$ for any $0<r<1$, where $2/s_1+3/q_1-1=2/s_2+3/q_2-1=1$. Set $\mathcal{E}_0(r):=C(u;r)+\sigma_1 C(b;r)+\sigma_2 D(p;r)$, where $\sigma_1=\varepsilon/\mathcal{N}$, and $\sigma_2$ is a small parameter to be determined later, then it holds by \eqref{eq: p(1)} and the above calculations that
\begin{equation}
    \label{eq: mathcalE(r)_0}
    \begin{split}
    \mathcal{E}_0(r) \leq& c_0 \theta^{-2} (\varepsilon \delta_1 + \varepsilon \delta_2^{-2} + \sigma_2) C(u;\rho) + c_0 \theta^{-2} (\varepsilon \delta_1 + \varepsilon \delta_2 + \sigma_2) C(b;\rho) \\
    &+ c_0 (\varepsilon \theta^{-2} \delta_2 + \sigma_2 \theta^{5/2}) D(p;\rho) + c_0 \varepsilon \theta^{-2} \delta_1^{-2},
    \end{split}
\end{equation}
where $\theta=r/\rho$. If we choose
\begin{equation*}
    \sigma_2=\frac{\theta^3}{4c_0} \sigma_1 = \frac{\theta^3 \varepsilon}{4c_0 \mathcal{N}}, \quad \delta_1=\frac{\theta^3}{4c_0 \varepsilon} \sigma_1 = \frac{\theta^3}{4c_0 \mathcal{N}}, \quad \delta_2=\frac{\theta^3}{4c_0 \varepsilon} \sigma_2 = \frac{\theta^6}{(4c_0)^2 \mathcal{N}},
\end{equation*}
and set
\begin{equation*}
    \varepsilon=\frac{\theta^3 (1-\theta) \varepsilon_0}{4 c_0} \delta_2^2 = \frac{\theta^{15} (1-\theta) \varepsilon_0}{(4 c_0)^5 \mathcal{N}^2}, \quad \theta \leq \frac{1}{4c_0},
\end{equation*}
where $\varepsilon_0$ is the small constant $\varepsilon$ in Theorem \ref{thm: smallness}, then \eqref{eq: mathcalE(r)_0} reduces to
\begin{equation*}
    \mathcal{E}_0(r) \leq \theta \mathcal{E}_0(\rho) + c_0 \theta^{-2} \varepsilon \delta_1^{-2},
\end{equation*}
and a standard iteration process similar to the previous text leads to
\begin{equation*}
    \mathcal{E}_0(r) \leq \theta^{-3} r \mathcal{E}_0(1) + \frac{c_0 \theta^{-2} \varepsilon}{1-\theta} \delta_1^{-2}
\end{equation*}
for small $r$. Recalling our setting for the parameters, it is easy to check that the residual term $=\theta^7 \varepsilon_0/(64 c_0^2) \leq \varepsilon_0$, so $\varlimsup_{r \to 0} \mathcal{E}_0(r) \leq \varepsilon_0$, which implies that
\begin{equation*}
    \varlimsup_{r \to 0} C(u;r) \leq \varepsilon_0, \quad \varlimsup_{r \to 0} C(b;r) \leq \frac{\varepsilon_0}{\sigma_1} = \frac{\varepsilon_0}{\varepsilon} \mathcal{N} = \frac{(4c_0)^5 \mathcal{N}^3}{\theta^{15} (1-\theta)} < \infty.
\end{equation*}
Therefore, it follows from Theorem \ref{thm: smallness} that $(0,0)$ is a regular point.

If we instead assume that $G(u,s_1,q_1;r) \leq \mathcal{M}$ and $G(b,s_2,q_2;r) \leq \mathcal{N}$ for any $0<r<1$, where $0 \leq 2/s_1+3/q_1-1<1$, $2/s_2+3/q_2-1=1$, then
\begin{equation*}
    \begin{split}
    \mathcal{E}_0(r) \leq& c_0 \big( \theta^{-2} \sigma_1 \mathcal{N} \delta_1 + \theta^{-2} \sigma_1 \mathcal{N} \delta_2^{-2} + \theta^{-2} \sigma_2 + \delta_3 \big) C(u;\rho) \\
    &+ c_0 \big( \theta^{-2} \sigma_1 \mathcal{N} \delta_1 + \theta^{-2} \sigma_1 \mathcal{N} \delta_2 + \theta^{-2} \sigma_2 + \delta_3 \big) C(b;\rho) \\
    &+ c_0 \big( \theta^{-2} \sigma_1 \mathcal{N} \delta_2 + \theta^{5/2} \sigma_2 + \delta_3 \big) D(p;\rho) \\
    &+ c_0 \theta^{-2} \sigma_1 \mathcal{N} \delta_1^{-2} + \theta^{-\frac{2}{1-\mu}} \delta_3^{-\frac{\mu}{1-\mu}} \big( \mathcal{M}^{\frac{3-2\mu}{1-\mu}} + \mathcal{M}^3 \big).
    \end{split}
\end{equation*}
Comparing this with the coefficients in \eqref{eq: mathcalE(r)_0}, it is easy to find that if we set
\begin{gather*}
    \sigma_1 = \frac{\theta^{15} (1-\theta) \varepsilon_0}{(4c_0)^5 \mathcal{N}^3}, \quad \sigma_2 = \frac{\theta^3}{4c_0} \sigma_1 = \frac{\theta^{18} (1-\theta) \varepsilon_0}{(4c_0)^6 \mathcal{N}^3}, \\
    \delta_1 = \frac{\theta^3}{4c_0 \mathcal{N}}, \quad \delta_2 = \frac{\theta^6}{(4c_0)^2 \mathcal{N}}, \quad \delta_3=\frac{\theta}{4c_0} \sigma_2 = \frac{\theta^{19} (1-\theta) \varepsilon_0}{(4c_0)^7 \mathcal{N}^3}, \quad \theta \leq \frac{1}{4c_0},
\end{gather*}
then
\begin{equation*}
    \mathcal{E}_0(r) \leq \theta \mathcal{E}_0(\rho) + c_0 \theta^{-2} \sigma_1 \mathcal{N} \delta_1^{-2} + \theta^{-\frac{2}{1-\mu}} \delta_3^{-\frac{\mu}{1-\mu}} \big( \mathcal{M}^{\frac{3-2\mu}{1-\mu}} + \mathcal{M}^3 \big),
\end{equation*}
and a standard iteration process yields that
\begin{equation*}
    \mathcal{E}_0(r) \leq \theta^{-3} r \mathcal{E}_0(1) + \frac{1}{1-\theta} \big[ c_0 \theta^{-2} \sigma_1 \mathcal{N} \delta_1^{-2} + \theta^{-\frac{2}{1-\mu}} \delta_3^{-\frac{\mu}{1-\mu}} \big( \mathcal{M}^{\frac{3-2\mu}{1-\mu}} + \mathcal{M}^3 \big) \big]
\end{equation*}
for small $r$. If we denote the residual term as $\mathcal{R}$, then $\varlimsup_{r \to 0} \mathcal{E}_0(r) \leq \mathcal{R}$, which implies that
\begin{gather*}
    \varlimsup_{r \to 0} C(u;r) \leq \mathcal{R} = \frac{\theta^7 \varepsilon_0}{64c_0^2} + c(c_0,\theta) \varepsilon_0^{-\frac{\mu}{1-\mu}} \mathcal{N}^{\frac{3\mu}{1-\mu}} \big( \mathcal{M}^{\frac{3-2\mu}{1-\mu}} + \mathcal{M}^3 \big), \\
    \varlimsup_{r \to 0} C(b;r) \leq \mathcal{R}/\sigma_1 = \frac{16 c_0^3 \mathcal{N}^3}{\theta^8 (1-
    \theta)} + c(c_0,\theta) \varepsilon_0^{-\frac{1}{1-\mu}} \mathcal{N}^{\frac{3}{1-\mu}} \big( \mathcal{M}^{\frac{3-2\mu}{1-\mu}} + \mathcal{M}^3 \big)
\end{gather*}
under our setting for the parameters. Therefore, the Type I characterisation follows from Theorem \ref{thm: boundedness}; and it holds by Theorem \ref{thm: smallness} that $(0,0)$ is a regular point, if we set $\mathcal{M}=\mathcal{M}(\varepsilon_0,\mathcal{N})$ small enough such that $\mathcal{R} \leq \varepsilon_0$.

The conclusions of Theorem III and Theorem \ref{thm: boundedness_mixed} in terms of $G(u,s_1,q_1;r)$ and $G(b,s_2,q_2;r)$ also hold for the case where both $2/s_1+3/q_1-1$ and $2/s_2+3/q_2-1$ are smaller than $1$, as a consequence of H\"older inequality.

\subsection{Mixed Lebesgue norms of \texorpdfstring{$\nabla u$ and $\nabla b$}{nabla u and nabla b}} \hfill \\
\label{subsec: (ii)}

Similar to Subsection \ref{subsec: (i)}, for any $1 \leq s,q \leq \infty$ with $1 \leq 2/s+3/q-1 \leq 2$, it holds that
\begin{equation*}
    \begin{split}
    \|f-[f]_{B_r}\|_{L^3(B_r)} \leq& c_0 \|f-[f]_{B_r}\|_{L^{q'}(B_r)}^\alpha \|f\|_{L^2(B_r)}^\beta \|f-[f]_{B_r}\|_{L^6(B_r)}^\gamma \\
    \leq& c_0 \|\nabla f\|_{L^q(B_r)}^{\alpha \vartheta} \|f\|_{L^2(B_r)}^{\alpha(1-\vartheta)+\beta} \|\nabla f\|_{L^2(B_r)}^\gamma,
    \end{split}
\end{equation*}
where $0 \leq \alpha,\beta,\gamma,\vartheta \leq 1$ and $1 \leq q' \leq \infty$ satisfy
\begin{equation}
    \label{eq: parameter3}
    \frac{\alpha}{q'}+\frac{\beta}{2}+\frac{\gamma}{6}=\frac{1}{3}, \quad \alpha+\beta+\gamma=1, \quad \frac{1}{q'}=\vartheta \bigg(\frac{1}{q}-\frac{1}{3}\bigg)+\frac{1-\vartheta}{2}.
\end{equation}
Integrating in $t$, we obtain
\begin{equation*}
    \begin{split}
    &\frac{1}{r^2} \iint_{Q_r} \big|f-[f]_{B_r}\big|^3 \\
    \leq& c_0 r^{-2} \bigg( \sup_{-r^2<t<0} \int_{B_r} |f|^2 \bigg)^{3[\alpha(1-\vartheta)+\beta]/2} \|\nabla f\|_{L_t^s L_x^q}^{3\alpha \vartheta} \bigg( \iint_{Q_r} |\nabla f|^2 \bigg)^{3\gamma/2} \cdot r^{2(1-3\alpha \vartheta/s-3\gamma/2)} \\
    \leq& c_0 [A(f;r)+E(f;r)]^\lambda K^{3\alpha \vartheta}(\nabla f;r), \quad \lambda:=\frac{3(1-\alpha \vartheta)}{2},
    \end{split}
\end{equation*}
provided that
\begin{equation}
    \label{eq: parameter4}
    \frac{3\alpha \vartheta}{s}+\frac{3\gamma}{2} \leq 1, \ \ \text{or equivalently,} \ \ 3 \alpha \vartheta \bigg(\frac{2}{s}+\frac{3}{q}-\frac{5}{2}\bigg) \leq \frac{1}{2}.
\end{equation}
In order that \eqref{eq: parameter4} is satisfied, take $\alpha \vartheta=1/3$, i.e., $\lambda=1$, when $2/s+3/q-2=1$. Moreover, as \eqref{eq: parameter3} implies that
\begin{equation*}
    \beta=\frac{1}{2}-\alpha \vartheta \bigg(\frac{3}{q}-\frac{5}{2}\bigg) - \alpha, \quad \gamma=\frac{1}{2}+\alpha \vartheta \bigg(\frac{3}{q}-\frac{5}{2}\bigg),
\end{equation*}
the setting $0 \leq \beta,\gamma \leq 1$ yields that
\begin{equation*}
    -\frac{1}{2} \leq \alpha \vartheta \bigg(\frac{3}{q}-\frac{5}{2}\bigg) \leq \frac{1}{2}-\alpha,
\end{equation*}
which is compatible with \eqref{eq: parameter4} only when $\alpha \vartheta \leq s/3$. Therefore, when $s=1$, we may also take $\alpha \vartheta=1/3$, i.e., $\lambda=1$. In other cases, it is legitimate to assume $\lambda<1$.

The rest proof is the same as Subsection \ref{subsec: (i)}, if we recall the simple fact that
\begin{equation*}
    \begin{split}
    C(f;r) \leq& c_0 C\big(f-[f]_{B_\rho};r\big)+c_0 C\big([f]_{B_\rho};r\big) \\
    \leq& c_0 \bigg(\frac{\rho}{r}\bigg)^2 C\big(f-[f]_{B_\rho};\rho\big) + c_0 \bigg(\frac{r}{\rho}\bigg) C(f;\rho).
    \end{split}
\end{equation*}

\subsection{Mixed Lebesgue norms of \texorpdfstring{$\curl u$ and $\curl b$}{curl u and curl b}} \hfill \\
\label{subsec: (iii)}

Recall the following result that has essentially been proved in \cite[Lemma 3.6]{gustafson2007Interior}.
\begin{lemma}
\label{thm: gradient<curl}
Suppose $\nabla f \in L_t^s L_x^q(Q_\rho)$ with $0 \leq 2/s+3/q-2 \leq 1$, $1 \leq s \leq \infty$, $(s,q) \neq (\infty,1)$ and $(s,q) \neq (1,\infty)$, then for any $0<2r<\rho$, it holds that
\begin{equation*}
    K(\nabla f,s,q;r) \leq c_0 \bigg(\frac{\rho}{r}\bigg)^{2/s+3/q-2} K(\curl f,s,q;\rho) + c_0 \bigg(\frac{r}{\rho}\bigg)^{2-2/s} K(\nabla f,s,q;\rho).
\end{equation*}
Furthermore, if $s=1$, $3 \leq q<\infty$, then
\begin{equation*}
    K(\nabla f,s,q;r) \leq c_0 \bigg(\frac{\rho}{r}\bigg)^{3/q} K(\curl f,s,q;\rho) + c_0 \bigg(\frac{r}{\rho}\bigg) K(\nabla f,s,q;\rho) + \mathcal{R}(f;r),
\end{equation*}
where $\mathcal{R}(f;r) \leq c_0 \rho^{-3/q} \int_{-r^2}^0 \|\nabla f(t)\|_{L^q(B_\rho)} \, \d t \to 0$ as $r \to 0$.
\end{lemma}
\begin{remark}
Similar argument also shows that if $\curl f \in L_t^s L_x^q$ near $(0,0)$, then so does $\nabla f$, where $s,q$ lie within the same range as Lemma \ref{thm: gradient<curl}.
\end{remark}

Starting from Lemma \ref{thm: gradient<curl}, we can immediately deduce from a standard iteration argument similar to the previous text that
\begin{equation*}
    \begin{split}
    K(\nabla f,s,q;r) \leq& c_1(\theta) \bigg(\frac{r}{r_0}\bigg)^\alpha K(\nabla f,s,q;r_0) + c_2(\theta) \sup_{0<r<r_0} K(\curl f,s,q;r) \\
    &+ c_3(\theta) \mathcal{R}(f;r_0), \quad \forall 0<r<\theta r_0
    \end{split}
\end{equation*}
for some $\alpha>0$, some small parameter $\theta>0$, and arbitrary small $r_0>0$. By letting first $r$ and then $r_0 \to 0$, we obtain $\varlimsup_{r \to 0} K(\nabla f,s,q;r) \leq c_2(\theta) \varlimsup_{r \to 0} K(\curl f,s,q;r)$. Therefore, when $(s,q)$ lies in the range in Lemma \ref{thm: gradient<curl}, the results of Theorem III and Theorem \ref{thm: boundedness_mixed} in terms of the curls follow from the results in terms of the gradients. The endpoint case $(s,q)=(1,\infty)$ can be derived from the cases where $s=1$, $3 \leq q<\infty$ and the H\"older inequality. In this way, we complete the proof of Theorem III and Theorem \ref{thm: boundedness_mixed}. \hfill$\Box$

\vskip 3mm
\noindent{\bf Conflict of interest.} {No potential conflict of interest was reported by the authors.}

\vskip 3mm
\noindent{\bf Ethics approval.} {Not applicable.}

\vskip 3mm
\noindent{\bf Funding.} {Hu was supported by China Scholarship Council, and Zhang was supported by NSFC grants (No. 12271423) and the Shaanxi Fundamental Science Research Project for Mathematics and Physics (No. 23JSY026).}

\vskip 3mm
\noindent{\bf Data availability.} {No data was used for the research described in the article.}

\end{CJK}
\end{document}